\numberwithin{dummy}{section}
\numberwithin{equation}{section}
\theoremstyle{plain}
\newtheorem{thm}{Theorem}[section]
\newtheorem{lemma}[thm]{Lemma}
\theoremstyle{plain}
\newtheorem{df}[thm]{Definition}
\theoremstyle{remark}
\newcommand{\R}{\mathbb{R}}
\newcommand{\N}{\mathbb{N}}
\newcommand*\dd{\mathop{}\!\mathrm{d}}
\newcommand*{\ddt}{\mathop{}\!\frac{\mathrm{d}}{\mathrm{d}t}}
\def\di{\mathop{\mathrm{div}}\nolimits}
\NewDocumentCommand{\oldnorm}{sO{}m}{%
  {\IfBooleanTF{#1}
    {\oldnormaux{\left|}{\right|}{#3}}
    {\oldnormaux{#2|}{#2|}{#3}}}
}
\newcommand{\oldnormaux}[3]{\mathpalette\oldnormaux@i{{#1}{#2}{#3}}}
\newcommand{\oldnormaux@i}[2]{\oldnormaux@ii#1#2}
\newcommand{\oldnormaux@ii}[4]{%
  \sbox\z@{$\m@th#1#2#4#3$}%
  \sbox\tw@{$\m@th\|$}%
  \mathopen{\hbox to\wd\tw@{\hss\vrule height \ht\z@ depth \dp\z@ width .3\wd\tw@\hss}}%
  #4
  \mathclose{\hbox to\wd\tw@{\hss\vrule height \ht\z@ depth \dp\z@ width .3\wd\tw@\hss}}%
}
\newcommand{\vect}[1]{\boldsymbol{#1}}
\newcommand{\abs}[1]{\left|{#1}\right|}
\newcommand{\Abs}[1]{\left\lVert{#1}\right\rVert}
\def\bq{\vect{q}}
\def\bg{\vect{g}}
\def\bz{\vect{z}}
\newcommand\pig[1]{\scalerel*[5.5pt]{\Big#1}{%
  \ensurestackMath{\addstackgap[1.5pt]{\big#1}}}}
\newcommand\pigl[1]{\mathopen{\pig{#1}}}
\newcommand\pigr[1]{\mathclose{\pig{#1}}}
\tikzstyle{dot}=[draw,fill=white,circle,inner sep=0pt,minimum size=4pt]
\tikzstyle{dot}=[draw,fill=white,circle,inner sep=0pt,minimum size=4pt]
\def\deltazero{
\begin{tikzpicture}[scale=2]
    \footnotesize
    \draw[ultra thin] (-0.2,  0.0) -- (2.2, 0.0) node[below]{$\abs{\nabla u}$};
    \draw[ultra thin] ( 0.0, -0.2) -- (0.0, 2.2) node[left ]{$|\bq|$};
      \draw[variable=\t,domain=0:2,thick,samples=400]
      plot ({\t}         ,{1}) node[draw,ultra thin,solid,right,xshift=3,yshift=-3] {$p=1$} ;
      \node[label={left:1}] at (0, 1) {};
      \node[label={below:1}] at (1, 0) {};
        \draw[variable=\t,domain=0:2,thick,samples=400]
      plot ({\t}         ,{(\t)^(1/2)} ) node[draw,ultra thin,right,xshift=3,yshift=3] {$p=\frac{3}{2}$};
    \draw[variable=\t,domain=0:2,thick,samples=400]
      plot ({\t}         ,{\t}         ) node[draw,ultra thin,above right,xshift=1.6,yshift=1.6] {$p=2$};
    \draw[variable=\t,domain=0:2,thick,samples=400]
      plot ({(\t)^(1/2)} ,{\t}         ) node[draw,ultra thin,above,yshift=3,xshift=8] {$p=3$};
    \draw[variable=\t,domain=0:2,thick,samples=400]
      plot (1,{\t}         ) node[draw,ultra thin,solid,above,yshift=3,xshift=-8] {$p=+\infty$};
    \draw[variable=\t,domain=0:1,thick,samples=400]
      plot (0,{\t}         );
      \draw[variable=\t,domain=0:1,thick,samples=400]
      plot ({\t},0         );
  \end{tikzpicture}
}
\def\deltaonea{
  \begin{tikzpicture}[scale=1.5]
    \footnotesize
    \draw[ultra thin] (-0.2,  0.0) -- (2.2, 0.0) node[below]{$|\nabla u|$};
    \draw[ultra thin] ( 0.0, -0.2) -- (0.0, 2.2) node[left ]{$|\bq|$};
    \node[label={left:1}] at (0, 1) {};
    \draw[variable=\t,domain=0:2,thin, dashed,samples=400]
      plot ({\t}         ,{1});
    \draw[variable=\t,domain=0:2,thick,samples=400,every node/.style={inner sep=1,outer sep=1}]
      plot ({\t},{(1+(\t)^(2))^(-1/2)*\t}) node[draw,ultra thin,right,xshift=3,yshift=-4] {$p=1$}
      plot ({\t},{(1+(\t)^(2))^(-1/4)*\t}) node[draw,ultra thin,right,xshift=3,yshift= 0.0] {$p=\frac 32$}
      plot ({\t},{\t}) node[draw,ultra thin,right,xshift=3,yshift= 0.0] {$p=2$};
      \draw[variable=\t,domain=0:1.3,thick,samples=400,every node/.style={inner sep=1,outer sep=1}]
      plot ({\t},{(1+(\t)^(2))^(1/2)*\t}) node[draw,ultra thin,right,xshift=3,yshift= 0] {$p=3$};
      \draw[variable=\t,domain=0:0.61,thick,samples=400,every node/.style={inner sep=1,outer sep=1}]
      plot ({\t},{(1+(\t)^(2))^(4)*\t}) node[draw,ultra thin,above,xshift=0,yshift=2] {$p=10$}
            ;
   \end{tikzpicture}
}
\def\deltaoneb{
  \begin{tikzpicture}[scale=1.5]
    \footnotesize
    \draw[ultra thin] (-0.2,  0.0) -- (2.2, 0.0) node[below]{$|\nabla u|$};
    \draw[ultra thin] ( 0.0, -0.2) -- (0.0, 2.2) node[left ]{$|\bq|$};
    \node[label={below:1}] at ( 1,0) {};
       \draw[variable=\t,domain=0:2,thin, dashed,samples=400]
      plot ({1},{\t});
    \draw[variable=\t,domain=0:2,thick,samples=400,every node/.style={inner sep=1,outer sep=1}]
      plot ({(1+(\t)^(2))^(-1/2)*\t},{\t}) node[draw,ultra thin,above,xshift=2,yshift= 2] {$p'=1$}
      plot ({(1+(\t)^(2))^(-1/4)*\t},{\t}) node[draw,ultra thin,right,xshift=1,yshift= 8.0] {$p'=\frac32$}
      plot ({\t},{\t}) node[draw,ultra thin,right,xshift=3,yshift= 0.0] {$p'=2$};
      \draw[variable=\t,domain=0:1.3,thick,samples=400,every node/.style={inner sep=1,outer sep=1}]
      plot ({(1+(\t)^(2))^(1/2)*\t},{\t}) node[draw,ultra thin,right,xshift=3,yshift= 0] {$p'=3$};
      \draw[variable=\t,domain=0:0.6,thick,samples=400,every node/.style={inner sep=1,outer sep=1}]
      plot ({(1+(\t)^(2))^(4)*\t},{\t}) node[draw,ultra thin,right,xshift=3,yshift=0.0] {$p'=10$}
            ;
  \end{tikzpicture}
}
\def\limiting{

\begin{tikzpicture}[scale=1.5]
    \footnotesize
    \draw[ultra thin] (-0.2,  0.0) -- (2.2, 0.0) node[below]{$|\bq|$};
    \draw[ultra thin] ( 0.0, -0.2) -- (0.0, 2.2) node[left ]{$|\nabla u|$};
    \draw[variable=\t,domain=0:2,thin, dashed,samples=400]
      plot ({\t}         ,{1});
    \node[label={left:$1$}] at (0, 1) {};
    \draw[variable=\t,domain=0:2,thick,samples=400,every node/.style={inner sep=1,outer sep=1}]
      plot ({\t},{(1+(\t)^(1/2))^(-2)*\t}) node[draw,ultra thin,right,xshift=3,yshift= -2.0] {$a=\frac12$}
      plot ({\t},{(1+(\t)^(1/1))^(-1)*\t}) node[draw,ultra thin,right,xshift=3,yshift= -2.0] {$a=1$}
      plot ({\t},{(1+(\t)^(2))^(-1/2)*\t}) node[draw,ultra thin,right,xshift=3,yshift=-2.5] {$a=2$}
      plot ({\t},{(1+(\t)^(6))^(-1/6)*\t}) node[draw,ultra thin,right,xshift=3,yshift= 2.5] {$a=6$}
      ;
    \draw[thick,variable=\t,every node/.style={inner sep=1,outer sep=1}]
      plot[domain=0:1] ({\t},{\t}) node[draw,ultra thin,above,xshift=4,yshift=4] {$a=+\infty$}
      --plot[domain=1:2] ({\t},{1});
 \end{tikzpicture}
 }
\begin{document}

\title{On evolutionary problems with a-priori bounded gradients}\thanks{M.~Bul\'{\i}\v{c}ek's work is supported by the project 20-11027X financed by Czech science foundation (GA\v{C}R). J.~M\'{a}lek acknowledges the support of the project No.\ 18-12719S financed by Czech Science Foundation (GA\v{C}R). M.~Bul\'{\i}\v{c}ek and J.~M\'{a}lek are  members of the Ne\v{c}as Center for Mathematical Modeling. The PhD position of D. Hru\v{s}ka is funded by the German Science Foundation DFG in context of the Priority Program SPP 2026 ``Geometry at Infinity''.}
\author[M.~Bul\'{i}\v{c}ek]{Miroslav Bul\'i\v{c}ek}
\address{Charles University, Faculty of Mathematics and Physics, Mathematical Institute\\
Sokolovsk\'{a} 83, 186 75 Prague, Czech Republic}
\email{mbul8060@karlin.mff.cuni.cz}

\author[D. Hru\v{s}ka]{David Hru\v{s}ka}
\address{Leipzig University, Faculty of Mathematics and Computer Science,
Institute of Mathematics,
Augustusplatz 10, 04 109 Leipzig, Germany}
\email{hruska@math.uni-leipzig.de}

\author[J. M\'{a}lek]{Josef M\'{a}lek}
\address{Charles University, Faculty of Mathematics and Physics, Mathematical Institute \\
Sokolovsk\'{a} 83, 186 75 Prague, Czech Republic}
\email{malek@karlin.mff.cuni.cz}

\begin{abstract}
    We study a nonlinear evolutionary partial differential equation that can be viewed as a generalization of the heat equation where the temperature gradient is a~priori bounded but the heat flux provides merely \mbox{$L^1$-coercivity}. Applying higher differentiability techniques in space and time, choosing a special weighted norm (equivalent to the Euclidean norm in $\mathbb{R}^d$), incorporating finer properties of integrable functions and using the concept of renormalized solution, we prove long-time and large-data existence and uniqueness of weak solution, with an $L^1$-integrable flux, to an initial spatially-periodic problem for all values of a positive model parameter. If this parameter is smaller than $2/(d+1)$, where $d$ denotes the spatial dimension, we obtain higher integrability of the flux. As the developed approach is not restricted to a scalar equation, we also present an analogous result for nonlinear parabolic systems in which the nonlinearity, being the gradient of a strictly convex function, gives an a-priori  $L^\infty$-bound on the gradient of the unknown solution.
\end{abstract}
\keywords{nonlinear parabolic equation, weak solution, existence, uniqueness, renormalized solution, $\infty$-Laplacian, a~priori bounded gradient}
\subjclass[2000]{35K59, 35K92,35D30, 76D03}

\maketitle
\section{Introduction}
\subsection{Problem setting and main result}
This paper concerns a parabolic-like problem involving nonlinear elliptic operators that can be viewed as regularizations of the $\infty$-Laplacian. More precisely, for fixed $L>0$ and $T>0$ we set $\Omega:=(0,L)^d\subset\R^d$ and $Q:=(0,T)\times\Omega$ and investigate the following problem: for given \mbox{$\Omega$-periodic} functions $g:[0,T]\times \R^d\to\R$, $u_0:\R^d\to\R$ and a given parameter~$a>0$, find an $\Omega$-periodic function $u:[0,T]\times \R^d\to\R$ and a vectorial $\Omega$-periodic function $\vect{q}:[0,T]\times \R^d\to \R^d$ such that
\begin{subequations}\label{zadani_classic}
\begin{align}
    \label{rovnice_classic}
    \partial_t u-\di  \vect{q}  &= g &&\textrm{in }Q,\\
    \label{constit_classic}
    \nabla u&=\frac{\vect{q}}{(1+\abs{\vect{q}}^a)^{\frac 1a}} &&\textrm{in }Q,\\
    \label{initial_classic}
u(0, \cdot)&=u_0&& \textrm{in }\Omega.
\end{align}
\end{subequations}
The motivation for investigating such type of problems is given below. The main result of this paper 
is the following: \emph{for sufficiently smooth initial data $u_0$, which satisfies a reasonable compatibility condition, and for sufficiently smooth right-hand side $g$, there exists a unique couple $(u,\bq)$ solving~\eqref{zadani_classic} in the sense of distributions.} To formulate the result  precisely, we need to fix the notation, the appropriate function spaces and the concept of solution to~\eqref{zadani_classic}. Since we are dealing with a spatially periodic problem, we recall the definition of periodic Sobolev spaces
\begin{equation*}\label{periodic_def}
W_{per}^{k,p}(\Omega):=\overline{\left\{u=\tilde{u}_{\big|\Omega},\,\tilde{u}\in C^{\infty}(\R^d)\text{~is~}\Omega\text{-periodic} \right\}}^{\Vert\cdot\Vert_{k,p}},
\end{equation*}
where $k\in \mathbb{N}_0$ and $p\in[1,\infty)$ are arbitrary (note that $L^2_{per}(\Omega)=L^2(\Omega)$ and that these spaces, as closed subspaces of reflexive Banach spaces, are reflexive as well provided that $p\in (1,\infty)$). The space $W^{k,\infty}_{per}$ is then defined as
$$
W^{k,\infty}_{per}(\Omega):=W^{k,2}_{per}(\Omega)\cap W^{k,\infty}(\Omega).
$$
Throughout the paper, we use standard notation for Lebesgue, Sobolev and Bochner spaces equipped with the usual norms. Unless stated otherwise, bold letters, e.g. $\bq$, are used for vector-valued functions to distinguish them from scalar functions. The symbol ``$\partial_t$" stands for the partial derivative with respect to the time variable $t\in(0,T)$, while the operators ``$\nabla$" and  ``$\di$" take into account only the spatial variables $(x_1,\ldots, x_d)\in \Omega$. Later, we also use ``$\partial_j$'' to abbreviate partial derivative with respect to $x_j$.  The shortcut ``a.e." abbreviates \emph{almost everywhere} and ``a.a." stands for \emph{almost all}.

Next, we define the notion of a weak solution to~\eqref{zadani_classic} and formulate the main result.
\begin{df}\label{weak_formulation}
Let $u_0\in L^2(\Omega)$, $g\in L^2(Q)$ and $a>0$. We say that  a couple $(u,\vect{q})$ is a weak solution to problem~\eqref{zadani_classic} if
$$
\begin{aligned}
u&\in W^{1,2}\left(0,T; L^2(\Omega)\right)\cap L^2\left(0,T; W^{1,2}_{per}(\Omega)\right),\\
\bq&\in L^1\left(0,T; L^1\left(\Omega; \mathbb{R}^d\right)\right)
\end{aligned}
$$
and
\begin{subequations}\label{flux-integrable_weak_formulation}
\begin{align}
       \int_{\Omega} \partial_t u \,\varphi + \bq \cdot \nabla \varphi \dd x&=\int_{\Omega} g \, \varphi\dd x &&\textrm{for all $\varphi\in W^{1,\infty}_{per}(\Omega)$ and a.a. $t\in (0,T)$},\label{rovnice}
    \\
    \nabla u&=\frac{\vect{q}}{(1+\abs{\vect{q}}^a)^{\frac 1a}} &&\textrm{a.e. in }Q,\label{constit_old}
    \\
     \Vert u(t,\cdot)-u_0\Vert_{L^2(\Omega)}&\xrightarrow{t\to 0^+}0. \label{initial}
\end{align}
\end{subequations}
\end{df}
\begin{thm}\label{main_thm}
Let  $a>0$, $g\in L^2\left(0,T; L^{2}(\Omega)\right)$ and $u_0\in W_{per}^{1,\infty}(\Omega)$ satisfy
\begin{equation}\label{flux_small_data}
    \|\nabla u_0\|_{L^{\infty}(\Omega)}=:U<1.
\end{equation}
{\bf (i)} Then there exists a unique weak solution to problem~\eqref{zadani_classic} in the sense of Definition \ref{weak_formulation}. Moreover, the solution satisfies
\begin{equation}\label{regularityM}
u\in L^2\left(0,T;W^{2,2}_{per}(\Omega)\right).
\end{equation}
{\bf (ii)}
Furthermore, if 
$g\in W^{1,2}\left(0,T; L^2(\Omega)\right)$ and $u_0\in W_{per}^{2,2}(\Omega)$, then $u\in W^{1,\infty}(0,T; L^2(\Omega))$. If, in addition, the parameter $a$ satisfies
\begin{equation}\label{flux_small_a}
a\in \left(0,\frac{2}{d+1}\right),
\end{equation}
then
\begin{equation}\label{pepa6}
\vect{q}\in L^b(Q;\mathbb{R}^d) \quad \textrm{ for } \quad \,\, \begin{cases} b = \frac{(1-a)(d+1)}{d-1} > 1 &\textrm{if } d\ge 2, \\
b \textrm{ arbitrary} &\textrm{if } d=1.
\end{cases}
\end{equation}
\end{thm}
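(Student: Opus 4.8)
\emph{Overall strategy.} Inverting the constitutive law, the problem reads $\partial_t u-\di\vect S(\nabla u)=g$ with $\vect S(\vect z):=\vect z\,(1-|\vect z|^a)^{-1/a}$, which is the gradient of a strictly convex potential $\Psi$ defined --- and singular --- only on the open unit ball $\{|\vect z|<1\}$; one computes $D\vect S(\vect z)\ge\mathrm{Id}$ (positive definite, as $D^2\Psi$) with smallest eigenvalue $(1-|\vect z|^a)^{-1/a}$. First I would regularize: for $0<\delta<1-U$ let $\vect S_\delta$ agree with $\vect S$ on $\{|\vect z|\le1-\delta\}$ and be continued radially and affinely for $|\vect z|>1-\delta$, so that $\vect S_\delta=\nabla\Psi_\delta$ is globally Lipschitz, $C^1$, strictly monotone, and still satisfies $D\vect S_\delta\ge\mathrm{Id}$ everywhere. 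The approximate problem $\partial_t u_\delta-\di\vect S_\delta(\nabla u_\delta)=g$, $u_\delta(0)=u_0$, has a unique weak solution $u_\delta\in W^{1,2}(0,T;L^2(\Omega))\cap L^2(0,T;W^{1,2}_{per}(\Omega))$ by the standard theory of monotone operators (a Galerkin scheme), and $D\vect S_\delta\ge\mathrm{Id}$ together with spatial difference quotients gives $u_\delta\in L^2(0,T;W^{2,2}_{per}(\Omega))$.

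\emph{Uniform estimates.} Testing with $u_\delta$ yields $\Vert u_\delta\Vert_{L^\infty(L^2)}+\Vert\vect S_\delta(\nabla u_\delta)\cdot\nabla u_\delta\Vert_{L^1(Q)}\le C$, and since $\vect S_\delta(\vect z)\cdot\vect z=|\vect z|\,|\vect S_\delta(\vect z)|\ge\tfrac12|\vect S_\delta(\vect z)|-C$ this gives $\Vert\vect q_\delta\Vert_{L^1(Q)}\le C$, where $\vect q_\delta:=\vect S_\delta(\nabla u_\delta)$ --- this is the \emph{only} coercivity on the flux. Testing with $\partial_t u_\delta$ gives $\Vert\partial_t u_\delta\Vert_{L^2(Q)}+\sup_t\int_\Omega\Psi_\delta(\nabla u_\delta)\le C$; \emph{here the smallness assumption $U<1$ is essential}, since it guarantees $\int_\Omega\Psi_\delta(\nabla u_0)\le|\Omega|\sup_{|\vect z|\le U}\Psi(\vect z)<\infty$ for every $a>0$. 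Differentiating the equation in $x_k$, testing with $\partial_k u_\delta$ and summing, the bound $D\vect S_\delta\ge\mathrm{Id}$ lets us absorb $-\int g\,\Delta u_\delta$ and obtain $\Vert\nabla u_\delta\Vert_{L^\infty(L^2)}^2+\Vert\nabla^2 u_\delta\Vert_{L^2(Q)}^2\le C$ uniformly in $\delta$ --- in particular \eqref{regularityM} will survive the limit. Finally, testing the $x_k$-differentiated equation with $(\vect q_\delta)_k=\partial_{z_k}\Psi_\delta(\nabla u_\delta)$ and summing (the elliptic term equals $\Vert B^{1/2}\nabla^2 u_\delta\,B^{1/2}\Vert_F^2$ with $B=D\vect S_\delta(\nabla u_\delta)$) produces, after using $\lambda_{\min}(D\vect S(\vect z))=(1-|\vect z|^a)^{-1/a}$ and passing $\delta\to0$, the weighted second-order estimate $\int_Q(1-|\nabla u|^a)^{-2/a}|\nabla^2 u|^2\le C$.

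\emph{Passage to the limit and uniqueness.} By Aubin--Lions, $u_\delta\to u$ in $C([0,T];L^2)$ and $\nabla u_\delta\to\nabla u$ in $L^2(Q)$ and a.e.; the constitutive relation forces $|\nabla u|<1$ a.e., so the truncation in $\vect S_\delta$ is eventually inactive pointwise and $\vect q_\delta\to\vect S(\nabla u)=:\vect q$ a.e. The obstacle is that $\{\vect q_\delta\}$ is bounded only in $L^1$; the remedy is equi-integrability. Passing to the limit in the energy \emph{identities} for $u_\delta$, and (using Fatou to see $\vect q\cdot\nabla u\in L^1$) testing the limit equation with $u$, one gets $\int_Q\vect q_\delta\cdot\nabla u_\delta\to\int_Q\vect q\cdot\nabla u$; combined with the a.e.\ convergence of the nonnegative integrands this upgrades to $\vect q_\delta\cdot\nabla u_\delta\to\vect q\cdot\nabla u$ in $L^1(Q)$, hence to equi-integrability of $\vect q_\delta\cdot\nabla u_\delta$ and, via $|\vect q_\delta|\le C(1+\vect q_\delta\cdot\nabla u_\delta)$, of $\vect q_\delta$; Vitali's theorem then gives $\vect q_\delta\to\vect q$ in $L^1(Q)$ and one passes to the limit in the weak formulation. (This is exactly where the fine properties of $L^1$-functions / the renormalization viewpoint come in.) Uniqueness is then direct precisely because the gradients are bounded: $u_1(t)-u_2(t)\in W^{1,\infty}_{per}(\Omega)$ is an admissible test function, $\vect q_i\in L^1$ while $\nabla(u_1-u_2)\in L^\infty$, so subtracting the equations and testing with $u_1-u_2$ gives $\tfrac12\tfrac{\mathrm d}{\mathrm dt}\Vert u_1-u_2\Vert_{L^2}^2+\int_\Omega(\vect S(\nabla u_1)-\vect S(\nabla u_2))\cdot(\nabla u_1-\nabla u_2)=0$, whence $u_1\equiv u_2$ by strict monotonicity of $\vect S$ and $u_1(0)=u_2(0)=u_0$, and then $\vect q_1=\vect q_2$.

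\emph{Part (ii), and the main obstacle.} If $g\in W^{1,2}(0,T;L^2)$ and $u_0\in W^{2,2}_{per}(\Omega)$, then differentiating the approximate equation in time, testing with $\partial_t u_\delta$, and applying Grönwall with $\partial_t u_\delta(0)=g(0)+\di\vect S(\nabla u_0)\in L^2(\Omega)$ (finite uniformly in $\delta$ since $u_0\in W^{2,2}$ and $|\nabla u_0|\le U<1$) yields $\Vert\partial_t u_\delta\Vert_{L^\infty(L^2)}\le C$, so $u\in W^{1,\infty}(0,T;L^2)$. Now let $a\in(0,2/(d+1))$, so $a<1$; set $w:=G(|\nabla u|)$ with $G'(r)=(1-r^a)^{-1/a}$. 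Then $|\nabla w|\le(1-|\nabla u|^a)^{-1/a}|\nabla|\nabla u||\le(1-|\nabla u|^a)^{-1/a}|\nabla^2 u|$, so the weighted second-order estimate gives $\nabla w\in L^2(Q)$; moreover $w\lesssim 1+\Psi(\nabla u)$ pointwise gives $w\in L^\infty(0,T;L^1(\Omega))$, and the basic flux bound gives $w\in L^{1/(1-a)}(Q)$. A parabolic Gagliardo--Nirenberg interpolation between these three pieces of information boosts $w$ to $L^{(d+1)/(d-1)}(Q)$; since near $|\nabla u|=1$ one has $|\vect q|\sim(1-|\nabla u|)^{-1/a}$ and $w\sim(1-|\nabla u|)^{-(1-a)/a}$, i.e.\ $|\vect q|\lesssim 1+w^{1/(1-a)}$, this yields \eqref{pepa6} with $b=\tfrac{(1-a)(d+1)}{d-1}$; for $d=1$, $W^{2,2}(0,L)\hookrightarrow C^1$ together with the time-uniform bounds makes $b$ arbitrary. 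The main obstacle throughout is the $L^1$-only control of the flux when passing to the limit --- compactness in $L^1$ is not free and the constitutive law cannot be identified by a naive Minty argument --- which is overcome by the equi-integrability mechanism above; a subsidiary difficulty in (ii) is matching the weight $(1-|\nabla u|^a)^{-1/a}$ from the second-order estimate against the parabolic interpolation scales so as to land exactly on the exponent $b$.
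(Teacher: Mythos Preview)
Your overall architecture and most of the estimates are sound, and in places you actually obtain sharper information than the paper (your test of the $x_k$-differentiated equation with $q_k$ yields $\int_Q\|B^{1/2}\nabla^2u_\delta\,B^{1/2}\|_F^2\le C$, which is a legitimate and slightly different weighted estimate from the paper's $\int_Q\oldnorm{\nabla\bq^\varepsilon}^2_{\mathbb A(\bq^\varepsilon)}$). The uniqueness argument is the same as the paper's, and your treatment of part~(ii) via $w=G(|\nabla u|)$ and anisotropic interpolation is a viable alternative to the paper's space--time Sobolev embedding of $(1+|\bq|)^{(1-a)/2}$.

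There is, however, a genuine gap in the passage to the limit for part~(i). Your equi-integrability mechanism is circular: from the approximate energy identities you correctly get that $\int_Q\bq_\delta\cdot\nabla u_\delta$ converges to some number $L$, but to identify $L=\int_Q\bq\cdot\nabla u$ you propose to ``test the limit equation with $u$''. That limit equation is precisely what you are trying to establish; without it you only have the Fatou inequality $\int_Q\bq\cdot\nabla u\le L$, which is not enough to conclude $L^1$-convergence of $\bq_\delta\cdot\nabla u_\delta$ and hence equi-integrability of $\bq_\delta$. Testing the $\delta$-equation with the \emph{limit} $u$ does not help either, since $\int_Q\bq_\delta\cdot\nabla u$ involves an $L^1$-bounded (but not weakly $L^1$-convergent) sequence against an $L^\infty$ function. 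This is exactly the obstruction the paper singles out in Subsection~1.4.

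The paper resolves this not by equi-integrability but by a \emph{renormalization} step: one tests the $\varepsilon_m$-equation with $\tau_k(|\bq^m|)\psi$, where $\tau_k$ is a smooth cut-off supported in $\{|\bq^m|\le k+1\}$. Then $\bq^m\tau_k(|\bq^m|)$ is bounded and the limit $m\to\infty$ goes through by dominated convergence; the price is a commutator term $\int_Q\psi\,\bq^m\cdot\nabla\tau_k(|\bq^m|)$. The crucial point --- and the real use of the weighted second-order estimate --- is to show this term vanishes as $m\to\infty$ and then $k\to\infty$: one rewrites it via an auxiliary primitive $G_k$ and integration by parts, and bounds the dangerous piece by $\big(\int_Q\oldnorm{\nabla\bq^m}^2_{\mathbb A(\bq^m)}\big)^{1/2}$ times a quantity that tends to zero with $k$ because $\bq\in L^1$. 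Your weighted estimate $\int_Q\|B^{1/2}HB^{1/2}\|_F^2\le C$ contains exactly the information needed for this step, but you have not used it there; you have used it only for the higher-integrability argument in part~(ii). Replacing your circular energy comparison by this truncation/renormalization argument would close the gap.
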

The paper is structured in the following way. In the rest of this section, we describe the main novelties of our result in detail. We also add a physical motivation for studying such problems and show the key difficulties of the studied problem. Section~\ref{preliminaries_section} contains several auxiliary results needed in the proof of Theorem~\ref{main_thm}. In Section~\ref{uniqueness_section}, we prove the uniqueness result. Sections~\ref{galerkin_section} and \ref{sec:5} concern the existence result. In Section~\ref{galerkin_section}, we introduce a suitable $\varepsilon$-approximation of the problem~\eqref{zadani_classic},  which is then treated by the standard Faedo-Galerkin method in combination with a cascade of energy estimates that helps to establish the existence of a weak solution to the $\varepsilon$-approximation for arbitrary fixed $\varepsilon\in (0,1)$.
Finally, we derive and summarize the whole cascade of estimates that are uniform with respect to $\varepsilon$. Then, in Section \ref{sec:5}, letting $\varepsilon\to 0+$, we incorporate the \emph{renormalization technique} together with a special choice of weigthed scalar product (equivalent to the standard scalar product in $\R^d$) to identify a weak solution of the original problem. Section~\ref{higher_integrability_section} is devoted to the proof of higher regularity (integrability) of the flux~$\vect{q}$ for the values of~$a$ satisfying~\eqref{flux_small_a}, which concludes the proof of the second part of Theorem \ref{main_thm}. In the final section, we formulate a generalization of the results stated in Theorem~\ref{main_thm}. 

\subsection{State of the art and main novelties}\label{background_section}

In order to put our result in an appropriate context, we introduce nonlinear (quasilinear) elliptic and parabolic problems characterized by the presence of $p$-Laplacian or its generalizations of various forms. Thus, for $d\in\N$, $a>0$, $\delta\in\{0,1\}$ and $p$ satisfying $1 < p \le \infty$, we define $\vect{f}_{\!p'}: \R^d\to \R^d$ by
\begin{equation}\label{formula_general}
    \vect{f}_{\!p'}(\vect{q}):=(\delta+\abs{\vect{q}}^a)^{\frac{p'-2}{a}}\vect{q}, \quad \textrm{ where } \,\, p' = \begin{cases} \frac{p}{p-1} &\textrm{if } p\in (1, \infty), \\ 1 &\textrm{if } p=\infty. \end{cases}
\end{equation}
Similarly, now for $p$ satisfying $1\le p < \infty$, we set $\vect{g}_p:\R^d \to \R^d$ as
\begin{equation}\label{formula_general2}
    \vect{g}_{p}(\vect{z}):=(\delta+\abs{\vect{z}}^a)^{\frac{p-2}{a}}\vect{z}.
\end{equation}
Replacing the equation \eqref{constit_classic} by
\begin{equation}\label{1.8}
\nabla u = \vect{f}_{\!p'}(\bq) \textrm{ with } \vect{f}_{\!p'} \textrm{ introduced in  \eqref{formula_general}},
\end{equation}
we obtain
\begin{equation}\label{pepa1}
\begin{aligned}
    \partial_t u-\di  \vect{q}  &= g &&\textrm{in }Q,\\
		\nabla u &= \left(\delta + |\vect{q}|^a\right)^{\frac{p'-2}{a}} \vect{q} &&\textrm{in }Q,\\
    u(0,\cdot)&=u_0&& \textrm{in }\Omega,
\end{aligned}
\end{equation}
while replacing \eqref{constit_classic} by
\begin{equation}\label{1.9}
\bq = \vect{g}_{p}(\nabla u) \textrm{ with } \bg_p \textrm{ introduced in \eqref{formula_general2}},
\end{equation}
we end up with
\begin{equation}
    \begin{aligned}
    \partial_t u-\di \left( (\delta + |\nabla u|^a)^{\frac{p-2}{a}} \nabla u \right) &= g &&\textrm{in }Q, \\
		u(0,\cdot)&=u_0&& \textrm{in }\Omega.
		\end{aligned}\label{pepa2}
\end{equation}

Next, let us first restrict ourselves to the case $p\in (1,\infty)$. Then,  the mappings $\vect{f}_{\!p'}$ and $\bg_{p}$ are strictly monotone for all $a>0$ and $\delta\in\{0,1\}$. In addition, when
$\delta=0$, $\vect{f}_{\!p'}=(\bg_{p})^{-1}$ and \eqref{pepa1} and \eqref{pepa2} coincide. Note that when $\delta=1$ the $(\bq,\nabla u)$-relations are smoothed out near zero (thus eliminating the degeneracy/singularity of the corresponding elliptic operator) and the problems \eqref{pepa1} and \eqref{pepa2} do not describe the same $(\bq,\nabla u)$-relation anymore. In all these cases the natural function spaces for the solution are as follows:
$$
\begin{aligned}
u&\in L^p\left(0,T; W^{1,p}_{per}(\Omega)\right)\cap W^{1,p'}\left(0,T;W^{1,p}_{per}(\Omega)^*\right),\\
\bq  &\in L^{p'}\left(0,T; L^{p'}\left(\Omega;\mathbb{R}^d\right)\right),
\end{aligned}
$$
provided that the data satisfy $u_0\in L^2_{per}(\Omega)$ and $g\in L^{p'}(0,T; W^{1,p}_{per}\left(\Omega)^*\right)$. Within this functional setting, the existence and uniqueness theory for such problems is nowadays classical, see~\cite{LaSoUr68,Li69} including and extending the monotone operator theory invented by Minty for the elliptic setting in Hilbert spaces (see~\cite{Minty}). It turns out that one can develop a rather complete theory for such problems and we refer to the classical monograph \cite{DiBe93} for additional regularity results. Furthermore, one can introduce a much more general class of possible relationships between $\bq$ and $\nabla u$ that goes far beyond \eqref{1.8} or \eqref{1.9} and where $\bq$ and $\nabla u$ are related implicitly. This means that instead of \eqref{constit_classic} one considers the equation $\vect{g}(\vect{q}, \nabla u) = \vect{0}$ in $Q$ with $\vect{g}:\R^{d}\times\R^d \to \R^d$ continuous. Under suitable assumptions imposed on $\vect{g}$, providing among others $p$-coercivity for $\nabla u$ and $p'$-coercivity for $\vect{q}$, a self-contained large-data mathematical theory within the above functional setting has been recently developed, also for the systems, in~\cite{BuMaMa20} (including, but also extending the results established in~\cite{BGMS1,BGMS2} in the context of fluid mechanics).

A natural and interesting question is what happens when $p\to 1^+$ or $p\to \infty$. In the case $\delta=0$, we formally obtain from~\eqref{1.9} for~$p=1$ that
$$
\bq = \frac{\nabla u}{|\nabla u|}.
$$
Then, the governing equation for the time-independent (stationary) problem being of the form \mbox{$-\di(\nabla u/|\nabla u|) = g$} formally represents the Euler-Lagrange equation corresponding to the minimization of the total variation functional. Analogously, and again for $\delta=0$, it follows from \eqref{1.8} that for $p=\infty$ (i.e. $p'=1$) one has
$$
\nabla u = \frac{\bq}{|\bq|},
$$
which,  together with the governing equation $-\di\vect{q} = g$, corresponds to the so-called $\infty$-Laplacian, see also Fig. \ref{Fig1}.
\begin{figure}[h]
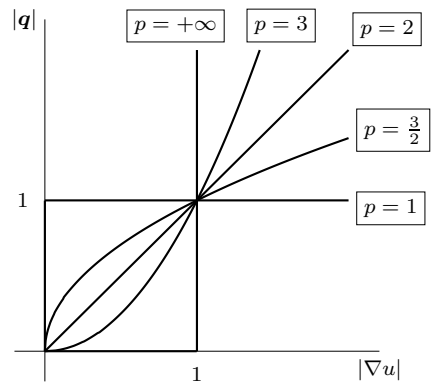

    \centering
    \deltazero{}
    \caption{If $p\in(1,\infty)$, then $\bq=\abs{\nabla u}^{p-2}\nabla u\Leftrightarrow\nabla u=\abs{\bq}^{p'-2}\bq$ with $p'=\frac{p}{p-1}$. Selected graphs are drawn (for values $p=\frac32,2,3$). The limiting cases $p=1$ and $p=\infty$ (i.e. $p'=1$) are sketched as well.}
    \label{Fig1}
\end{figure}

Both  limiting cases have attracted attention in the scientific community. Not only is  the understanding of these limiting cases interesting as a mathematical problem \emph{per se}, but also the total variation equation or $\infty$-Laplacian are frequently used when studying sharp interface-like problems, image recovering, etc. Let us point out that, in the elliptic (i.e. stationary) setting, one faces serious difficulties with defining a proper concept of solution and usually one has to introduce a new one. While for $p=1$ this has led to the theory of BV spaces, see e.g.~\cite{Gi84}, for $p=\infty$ the concept of \emph{viscosity} solution was introduced in~\cite{Ar67}. In principle, one can say that the expected $L^1$-regularity for $\nabla u$ (when $p=1$) or the $L^1$-regularity for $\bq$ (when $p=\infty$) must be relaxed and one is led to work in the ``weak$^{\ast}$ closure of $L^1$" or, more precisely, in the space of Radon measures. In the parabolic setting, there is a certain mollification effect coming from the presence of the time derivative and therefore the case $p=1$ is not so difficult to treat provided that the initial data are sufficiently regular, see e.g.~\cite{AnCaMa04}. However, for $p=\infty$, one  seems to be forced to keep the notion of a viscosity solution, see~\cite{AkJuKa09,PoVa13}. Furthermore, it is also well known that the viscosity solution is in principle the best object one can deal with, which is well documented by the existence of a singular solution (see~\cite{Ar86} or the monograph~\cite{Li16}).

The above discussion was focused on the case $\delta=0$, which leads to certain singular behaviour near zero. For a mollified problem with $\delta=1$, the limiting cases take the form
\begin{align*}
\bq &= \frac{\nabla u}{(1+|\nabla u|^a)^{\frac{1}{a}}} \quad\,\, \textrm{ for } p=1,\\
\nabla u &= \frac{\bq}{(1+|\bq|^a)^{\frac{1}{a}}} \qquad \textrm{ for } p=\infty,
\end{align*}
which may have better properties since both equations represent strictly monotone mapping unlike the case $\delta=0$, see also Fig. \ref{Fig2}. Nevertheless, even in this regularized case, one encounters difficulties. The most famous example concerns the case $a=2$ and $p=1$, i.e. the minimal surface problem. Due to Finn's counterexample (see~\cite{Fi65}), it is known that even for smooth data one can obtain an irregular solution that is not a Sobolev function. However, such a singularity appears only on (the Dirichlet part of) the boundary.  This follows from two results: the interior regularity established for the stationary problem with $p=1$ and $a\le2$ in \cite{BiFu02} and the existence result established in \cite{BeBuGm20} showing that the solution of the Neumann problem (for $p=1$ and $a>0$ arbitrary) is indeed a Sobolev function and there is no need to involve BV spaces. As this paper documents, a similar situation occurs the problems with $p=\infty$ and $\delta=1$.
\begin{figure}[h]
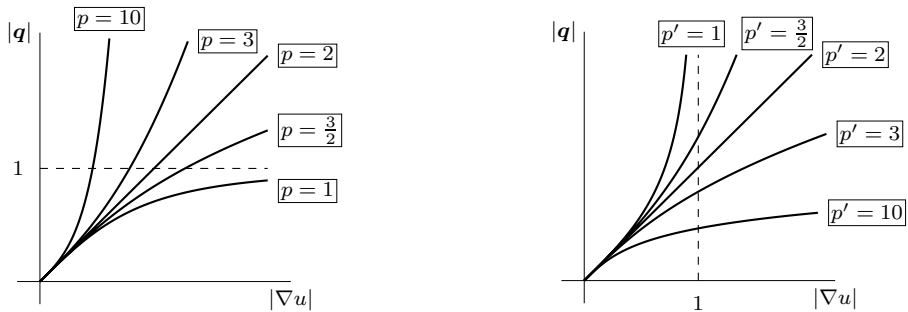

  \hfill\deltaonea\hfill\deltaoneb\hfill
  \caption {On the left, the graphs of
  $\bq=(1+\abs{\nabla u}^{2})^{\frac{p-2}{2}}\nabla u$ are sketched for selected values of $p\in[1,\infty)$, namely $p=1,\frac32,2,3,10$.
  On the right, the graphs of $\nabla u=(1+\abs{\bq}^{2})^{\frac{p'-2}{2}}\bq$ are shown for $p'=1,\frac{3}{2},2,3,10$.
  }
  \label{Fig2}
\end{figure}

Apparently, one could follow the procedure developed for $\infty$-Laplacian and try to treat the problem with the notion of viscosity solution. However, it is not clear how to adopt the theory of viscosity solution to our setting since we are dealing with a different elliptic operator (compare the limiting behaviour for $p=\infty$ and $\delta=0$ or $\delta=1$ depicted at Figures \ref{Fig1} and \ref{Fig2}). More importantly, it turns out (and this is one of the main messages of this paper) that we \emph{do not need} to introduce the concept of viscosity solution as we are able to establish the existence of a standard weak solution. Our method builds on the approach developed in~\cite{BuMaSu15} and~\cite{BeBuMaSu16}, where a similar elliptic problem arising in solid mechanics is analyzed. In this paper, we generalize the approach proposed in~\cite{BuMaSu15,BeBuMaSu16} (and used in some sense also in~\cite{BeBuGm20}) and adopt it to the parabolic setting.

An interesting problem might be the study of the limit $a\to \infty$. In such a case
\begin{equation*}
  (1+\abs{\vect{q}}^a)^{\frac{1}{a}}\searrow\max\{1,\abs{\vect q}\}\text{~as~}a\to\infty
\end{equation*}
and consequently (for $\vect{f}_{\!1}$ introduced in \eqref{formula_general})
\begin{equation*}
    \vect{f}_{\!1}(\vect{q})=\frac{\vect{q}}{(1+\abs{\vect{q}}^a)^{\frac 1a}}\nearrow \frac{\vect{q}}{\abs{\vect{q}}} \min\left\{1,\abs{\vect{q}}\right\}\text{~as~}a\to\infty.
\end{equation*}
However, the limiting mapping is not \emph{strictly} monotone (see Fig. \ref{Fig3}) and the method developed in this paper cannot be applied.
\begin{figure}[h]
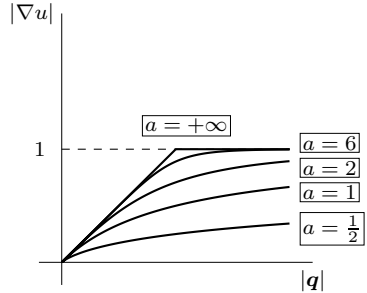

\centering
\limiting{}
\caption{The graphs of $\nabla u=\frac{\bq}{(1+\abs{\bq}^a)^{\frac{1}{a}}}$ are drawn for selected values of parameter $a\in(0,\infty)$. The limiting case $a=\infty$ is sketched as well.}
\label{Fig3}
\end{figure}

To summarize and emphasize the novelty of our result once again, we show the existence of a weak solution to the evolutionary problem~\eqref{zadani_classic} for all $a>0$ with no need to introduce the concept of viscosity solution and with $\bq$ being an integrable function.

It is worth mentioning that our proof of Theorem~\ref{main_thm}, as presented below, is based on two properties of the nonlinear function~$\vect{f}_{\!1}$ defined in~\eqref{formula_general}, namely, its radial structure, i.e. $\vect{f}_{\!1}(\bq) = \alpha(|\bq|)\bq$, and the existence of strictly convex potential to $\vect{f}_{\!1}$. Consequently, the specific form of the equation \eqref{constit_classic} is not essential and we can develop a satisfactory theory for a general class of relations behaving like mollified $\infty$-Laplacian (provided that there is a strictly convex potential behind). We state such a generalized result in Theorem~\ref{main_thm2} in Section~\ref{rem-final} but do not provide the proof for simplicity here.  However, an interested reader can compare our proof with the general methods invented in~\cite{BeBuMa18} for the elliptic setting. In fact, by adopting these methods and combining them with the proof of Theorem~\ref{main_thm}, one can prove Theorem~\ref{main_thm2}.

\subsection{A fluid mechanics problem motivating this study}\label{simple} Consider an incompressible fluid with constant density flowing, at a uniform temperature, in a three-dimensional domain. In the absence of external body forces, unsteady flows of such a fluid are described by the following set of equations for the unknown velocity field $\vect{v}= (v_1,v_2,v_3)$ and the pressure $p$:
\begin{equation}
\di\vect{v} = 0, \qquad \partial_t \vect{v} + \sum_{k=1}^3 v_k \partial_{k} \vect{v} = - \nabla p  + \di \mathbb{S}\,, \label{fluid1}
\end{equation}
where $\mathbb{S}$, the deviatoric part of the Cauchy stress tensor, enters the additional (so-called constitutive) equation relating $\mathbb{S}$ to the symmetric part of the velocity gradient denoted by $\mathbb{D}$ and characterizing the material properties of a particular class of fluids. While for the Newtonian fluids one has $\mathbb{S} = 2\nu_* \mathbb{D}$, where $\nu_*>0$ is the kinematic viscosity, there are many viscous fluids and fluid-like materials in which the relation between $\mathbb{S}$ and $\mathbb{D}$ is nonlinear. There are fluids (see for example~\cite{Lee2003,Ref1,Ref2,Ref3,Ref4}) in which the constitutive relation capable of describing experimental data can be of the form 
\begin{equation}
    \label{constit}
    2\nu_*\mathbb D=\frac{\mathbb S}{\left(1+\pigl(\frac{1}{\sqrt{2}}\abs{\mathbb S}\pigr)^a\right)^{\frac 1a}} \qquad \textrm{ for some } a>0 \textrm{ and } \nu_*>0.
\end{equation}
The general goal is to understand mathematical properties associated with the system of partial differential equations \eqref{fluid1}-\eqref{constit}. A possible natural approach is to look first at a geometrically simplified version of the problem. For example, one can investigate simple shear flows taking place between two infinite parallel plates located at $x_2=0$ and $x_2=L$. Time-dependent simple shear flows are characterized by the velocity field of the form $\vect{v}(t,x_1,x_2,x_3)=(u(t,x_2),0,0)$. Note that such velocity field fulfills $\di \vect{v}=0$. We also infer that the only nontrivial components of $\mathbb{D}$ are $\mathbb{D}_{12}=\mathbb{D}_{21}=\frac{1}{2} \partial_2 u$. Hence it follows from \eqref{constit} that also all components of $\mathbb S$ other than $\mathbb{S}_{12}=\mathbb{S}_{21}=:\sigma=\sigma(t,x_2)$ vanish. Then the second equation in \eqref{fluid1} together with \eqref{constit} leads to:
\begin{subequations}\label{system_basic}
\begin{align}
     \label{moment_x}\partial_t u&=-\partial_1 p + \partial_2 \sigma, \qquad 0 = -\partial_2 p, \qquad 0 = -\partial_3 p, \\
    \label{simple_constit}
    \nu_* \partial_2 u&=\frac{\sigma}{(1+\abs{\sigma}^a)^{\frac 1a}}.
\end{align}
\end{subequations}
It follows from the second and the third equation in \eqref{moment_x} that $p=p(t,x_1)$. After inserting this piece of information into the first equation of \eqref{moment_x} we can decompose this equation and obtain
\begin{equation}\label{pepa5}
(\partial_t u - \partial_2 \sigma)(t, x_2) = g(t) \qquad \textrm{ and } \qquad - \partial_1 p (t,x_1)=g(t)
\end{equation}
for some function $g$ depending only on time.
When studying the unsteady Poiseuille flow, the function $g$, corresponding to the pressure drop, must be given. Then the first equation in~\eqref{pepa5} together with~\eqref{simple_constit} represents a one-dimensional version of the governing equations of the problem~\eqref{zadani_classic} studied in this paper (with the caveat that in~\eqref{zadani_classic} the function~$g$ may also depend on the spatial variable).


\subsection{Difficulties and main idea}\label{difficulties}
As mentioned above, the key difficulty is due to a weak a~priori estimate for $\bq$ compensating the fact that $\nabla u$ is bounded a~priori. To be more explicit, let us recall the definition~\eqref{formula_general} with $\delta=1$, i.e. $\vect{f}_{\!1}(\vect{q}):= \frac{\vect{q}}{(1+ |\vect{q}|^a)^{\frac{1}{a}}}$. Obviously, $\abs{\vect{f}_{\!1}(\vect{q})}=\frac{\abs{\vect{q}}}{(1+\abs{\vect{q}}^a)^{\frac 1a}}<1$ for all $\vect{q}\in\R^d$. This directly yields that $\nabla{u}\in L^{\infty}(Q;\R^d)$, but it also brings the restriction that the inverse function of (the injective function) $\vect{f}_{\!1}$ cannot be defined outside of the unit ball in $\R^d$ and hence we may not simply write $\vect{q}$ as a function of $\nabla{u}$ and directly apply the Faedo--Galerkin approximation method.

Next, standard energy estimates are not sufficient to establish the existence of a weak solution. Indeed, multiplying the linear equation \eqref{rovnice_classic} by the solution $u$, integrating by parts with respect to the spatial variables (the spatial periodicity ensures that the boundary terms vanish) and substituting for $\nabla u$ from \eqref{constit_classic} we conclude that
$$
\int_{Q} \frac{\abs{\vect{q}}^2}{(1+\abs{\vect{q}}^a)^{\frac{1}{a}}} \dd x \dd t < \infty.
$$
However, this implies merely that $\vect{q}$ belongs to $L^1(Q;\R^d)$ which is not a reflexive Banach space (it does not even have a predual). Hence, when constructing a solution,  we may not identify a weak limit of a subsequence of $\left\{\vect{q}^n\right \}_{n=1}^{\infty}$, a sequence of some approximations bounded in $L^1(Q;\R^d)$. Similar difficulties occur if one aims to investigate the limiting behaviour when converging from the $p$-Laplacian to the $\infty$-Laplacian, i.e. when studying the limit $p'\to 1+$ in~\eqref{formula_general}.


At this point one might consider a~priori estimates involving higher derivatives. Let us denote by $s$ a general time or spatial variable, i.e. $s$ can represent $t, x_1, \ldots, x_d$. Let us differentiate the equation \eqref{rovnice_classic} with respect to $s$, multiply the result by $\partial_s u$ and integrate over $\Omega$. Finally, in the integral involving $\bq$, we integrate by parts and obtain
\begin{equation*}\label{formal_s_carkou}
    \frac 12  \ddt \Vert \partial_s u\Vert_{L^2(\Omega)}^2+\int_{\Omega}  \partial_s\vect{q}\cdot \partial_s(\nabla  u) \dd x=\int_{\Omega} \partial_s g \partial_s u\dd x.
\end{equation*}
Hence, if the data are sufficiently regular, one can hope for an a~priori estimate for $\vect{q}$ of the form
\begin{equation}\label{odhady_s_carkou}
    \int_{Q} \partial_s\vect{q}\cdot\partial_s(\nabla u)\dd x \dd t<\infty.
\end{equation}

Let us now focus on the information coming from \eqref{odhady_s_carkou} for general $\vect{f}_{\!p'}$ with $p'\in [1,\infty)$. Using \eqref{formula_general} (cf. Lemma \ref{derivace}) one obtains
\begin{equation}\label{vypocet_s_carkou_2}
    \partial_s\vect{q}\cdot \partial_s(\nabla u)=(1+\abs{\vect{q}}^a)^{\frac{p'-2-a}{a}}\left(\abs{\partial_s\vect{q}}^2(1+\abs{\vect{q}}^a)+(p'-2)\abs{\vect{q}}^{a-2}(\vect{q}\cdot \partial_s\vect{q})^2\right).
\end{equation}
For $p'>1$ we have $p'-2>-1$ and we can employ the Cauchy--Schwarz inequality for the last term to obtain the estimate
\begin{equation*}
     \partial_s\vect{q}\cdot \partial_s(\nabla u)\geq C(1+\abs{\vect{q}}^a)^{\frac{p'-2}{a}}\abs{\partial_s\vect{q}}^2,
\end{equation*}
where $C:=\min\{p'-1,1\}>0$  and this can be exploited to control $\partial_s\vect{q}$ in $L^s(Q;\R^d)$ for some $s>1$. However, in the critical case $p'=1$, there is a sudden loss of information as one  then deduces merely the estimate
\begin{equation}\label{o_jednicku}
     \partial_s\vect{q}\cdot\partial_s(\nabla u)\geq (1+\abs{\vect{q}}^a)^{\frac{-1-a}{a}}\abs{\partial_s\vect{q}}^2.
\end{equation}
Consequently, the power of $\abs{\vect{q}}$ in this weighted estimate drops by $a$. For small values of $a$, namely for those satisfying \eqref{flux_small_a}, it can be deduced from \eqref{odhady_s_carkou} and \eqref{o_jednicku} using Sobolev embedding that $\bq$ is bounded in $L^b(Q;\mathbb{R}^d)$ for some $b>1$, see \eqref{pepa6}. This is shown in the proof of the second part of Theorem~\ref{main_thm}. However, for large values of~$a$, the estimate \eqref{o_jednicku} seems to be useless at the first glance. We will however show that it implies \emph{almost everywhere convergence} for a selected subsequence of $\{\vect{q}^m\}$. This is still not sufficient to take the limit in the governing equation (due to $L^1$-integrability of $\{\vect{q}^m\}$). This is why we introduce the concept of \emph{renormalized solution} for a suitable $m$-approximating problem and then, in order to take the limit from the renormalized formulation of the approximate problem to the weak formulation of the original problem, we shall work directly with the quantity $\partial_s\vect{q}\cdot \partial_s(\nabla u)$ (or more precisely with the right-hand side of \eqref{vypocet_s_carkou_2}), which in some sense still generates an estimate for $\partial_s \bq$ in some scalar product in $\mathbb{R}^d$ induced by $\bq$ itself.
%


\section{Preliminaries}\label{preliminaries_section}

\noindent Here and in the remaining parts of this text we set, for $a>0$,
\begin{equation}
\vect{f}(\vect{q}):=\frac{\vect{q}}{(1+\abs{\vect{q}}^a)^{\frac 1a}} \quad \textrm{ where } \vect{q}\in \mathbb{R}^d.\label{dec:1}
\end{equation}
The aim of this section is to collect basic properties of $\vect{f}$ as well as its $\varepsilon$-approximation $\vect{f}^{\varepsilon}$ defined, for $\varepsilon >0$, as:
\begin{equation}\label{dec:2}
\vect{f}^{\varepsilon}(\vect{q}):=\vect{f}(\vect{q}) + \varepsilon\vect{q} =  \frac{\vect{q}}{(1+\abs{\vect{q}}^a)^{\frac 1a}}+\varepsilon\vect{q}.
\end{equation}
\begin{lemma}\label{derivace} The following assertions hold true:
\begin{enumerate}[label={(\roman*)}]
    \item\label{prvni_cast_item} $\vect{f}$, $\vect{f}^{\varepsilon}\in C^1(\R^d;\R^d)$ and for all $i,j=1,\ldots, d$ and arbitrary $\vect{q}\in \mathbb{R}^d$ there holds:
\begin{equation}\label{prvni_cast}
   \left(\nabla_{\!\vect{q}}\vect{f} (\vect{q})\right)_{ij}:= \frac{\partial f_i(\vect{q})}{\partial q_j} =\frac{(1+\abs{\vect{q}}^a)\delta_{ij}-\abs{\vect{q}}^{a-2}q_iq_j}{(1+\abs{\vect{q}}^a)^{1+\frac 1a}} \quad \textrm{ and } \quad \left(\nabla_{\!\vect{q}} \vect{f}^{\varepsilon} (\vect{q})\right)_{ij} = \left(\nabla_{\!\vect{q}}  \vect{f} (\vect{q})\right)_{ij} + \varepsilon\delta_{ij},
\end{equation}
where $\delta_{ij}$ is the Kronecker delta.
\item Introducing the scalar functions $f(s):=\frac{s}{(1+s^a)^{\frac 1a}}$ and $f_{\varepsilon}(s):= f(s) +\varepsilon s$ we have the following ``radial" representations for $\vect{f}$ and $\vect{f}^{\varepsilon}$:
\begin{equation}\label{prepis}
    \vect{f} (\vect{q})=f(\abs{\vect{q}})\frac{\vect{q}}{\abs{\vect{q}}} \quad \textrm{ and  }\quad \vect{f}^{\varepsilon}(\vect{q})=f_{\varepsilon}(\abs{\vect{q}})\frac{\vect{q}}{\abs{\vect{q}}} \quad \textrm{ for every } \vect{q}\neq\vect{0}.
\end{equation}
\item  For $\varepsilon> 0$ the function $\vect{f}^{\varepsilon}$ is a diffeomorphism from $\R^d$ onto $\R^d$, while $\vect{f}$ is a diffeomorphism from $\R^d$ onto the open unit ball $B_1(0)\subset\R^d$.
\end{enumerate}
\end{lemma}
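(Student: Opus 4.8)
\textbf{Proof plan for Lemma~\ref{derivace}.}
The plan is to verify each of the three assertions by direct computation, exploiting the elementary scalar function $\rho\mapsto(1+\rho^a)^{1/a}$ and its derivative. For part~\ref{prvni_cast_item}, I would write $f_i(\vect q)=q_i\,\psi(|\vect q|)$ with $\psi(\rho):=(1+\rho^a)^{-1/a}$, which is $C^1$ on $(0,\infty)$ with $\psi'(\rho)=-\rho^{a-1}(1+\rho^a)^{-1-1/a}$, and note that $|\vect q|$ is smooth away from the origin, so $\vect{f}\in C^1(\R^d\setminus\{0\})$ is immediate; then differentiating $f_i$ using $\partial_j|\vect q|=q_j/|\vect q|$ gives exactly the claimed formula
$$
\frac{\partial f_i}{\partial q_j}=\frac{(1+|\vect q|^a)\delta_{ij}-|\vect q|^{a-2}q_iq_j}{(1+|\vect q|^a)^{1+\frac1a}}.
$$
The only delicate point is $C^1$-regularity \emph{at} the origin: since $|\vect q|^{a-2}q_iq_j\to 0$ as $\vect q\to 0$ for every $a>0$ (because $|\,|\vect q|^{a-2}q_iq_j|\le |\vect q|^a\to 0$), the displayed expression extends continuously to $\vect q=0$ with value $\delta_{ij}$; combining this with the fact that $\vect f(\vect q)=\vect q+O(|\vect q|^{1+a})$ near $0$ shows the partials exist and are continuous there, so $\vect f\in C^1(\R^d;\R^d)$. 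The formulae for $\vect f^{\varepsilon}$ follow trivially from $\vect f^{\varepsilon}=\vect f+\varepsilon\,\mathrm{id}$.

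For part (ii), the radial representation is just the observation that $\vect f(\vect q)=\psi(|\vect q|)\vect q=\frac{\psi(|\vect q|)|\vect q|}{|\vect q|}\vect q=f(|\vect q|)\frac{\vect q}{|\vect q|}$ for $\vect q\neq 0$, and adding $\varepsilon\vect q=\varepsilon|\vect q|\frac{\vect q}{|\vect q|}$ yields $\vect f^{\varepsilon}(\vect q)=f_{\varepsilon}(|\vect q|)\frac{\vect q}{|\vect q|}$. Nothing beyond rewriting is needed here.

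For part (iii), I would argue as follows. Injectivity and the image being $B_1(0)$ (resp.\ all of $\R^d$) is most cleanly seen via the radial structure of part (ii): $\vect f$ maps the ray $\{t\vect\omega:t\ge0\}$ ($|\vect\omega|=1$) to the ray $\{f(t)\vect\omega:t\ge0\}$, and $f:[0,\infty)\to[0,1)$ is a strictly increasing bijection since $f'(s)=(1+s^a)^{-1-1/a}>0$ and $f(s)\to1$ as $s\to\infty$; likewise $f_{\varepsilon}:[0,\infty)\to[0,\infty)$ is a strictly increasing bijection since $f_{\varepsilon}'(s)=f'(s)+\varepsilon\ge\varepsilon>0$ and $f_{\varepsilon}(s)\to\infty$. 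Hence $\vect f$ is a continuous bijection $\R^d\to B_1(0)$ and $\vect f^{\varepsilon}$ a continuous bijection $\R^d\to\R^d$. To upgrade to diffeomorphism I would invoke the inverse function theorem: by part~\ref{prvni_cast_item} the Jacobian matrix is $\psi(|\vect q|)\big(I-\tfrac{|\vect q|^{a}}{1+|\vect q|^a}\,\tfrac{\vect q\otimes\vect q}{|\vect q|^2}\big)$ (plus $\varepsilon I$ in the $\vect f^{\varepsilon}$ case), which is a symmetric matrix with eigenvalue $\psi(|\vect q|)\cdot\frac{1}{1+|\vect q|^a}>0$ in the direction $\vect q$ and eigenvalue $\psi(|\vect q|)>0$ on $\vect q^{\perp}$ (again shifted by $\varepsilon$), so it is everywhere positive definite, in particular invertible; a $C^1$ bijection with everywhere invertible derivative is a $C^1$-diffeomorphism onto its (open) image. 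I expect the main obstacle — mild as it is — to be the $C^1$-regularity at the origin in part~\ref{prvni_cast_item} when $a<1$ (so that $\rho\mapsto\rho^a$ is itself not differentiable at $0$): one must check that the \emph{composite} $\vect f$ is nonetheless $C^1$ there, which works because the non-smooth term enters only through the combination $|\vect q|^{a-2}q_iq_j$ that vanishes continuously at $0$, and more robustly because $\vect f(\vect q)-\vect q$ and all its difference quotients are $o(1)$ near the origin.
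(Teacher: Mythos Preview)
Your proposal is correct and follows essentially the same route as the paper: direct computation of the partial derivatives with the continuity check at the origin via $|\vect q|^{a-2}q_iq_j\to 0$, the trivial radial rewriting for part~(ii), and the analysis of the scalar profile $f$ (resp.\ $f_\varepsilon$) as a strictly increasing bijection onto $[0,1)$ (resp.\ $[0,\infty)$). The only minor difference is in part~(iii): the paper simply writes down the explicit radial inverse $\vect y\mapsto f^{-1}(|\vect y|)\frac{\vect y}{|\vect y|}$ and asserts its $C^1$-regularity, whereas you instead diagonalize the Jacobian, note its positive-definiteness, and invoke the inverse function theorem --- both arguments are equally short and valid.
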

\begin{proof} For $\vect{q}\neq\vect{0}$ 
we have
\begin{equation}\label{dukaz_lematka}\notag
   \frac{\partial f_i^{\varepsilon}(\vect{q})}{\partial q_j} = \frac{\partial}{\partial q_j}
   \left(\frac{q_i}{(1+\abs{\vect{q}}^a)^{\frac 1a}}\right)+\varepsilon\delta_{ij}=\frac{(1+\abs{\vect{q}}^a)\delta_{ij}-\abs{\vect{q}}^{a-2}q_iq_j}{(1+\abs{\vect{q}}^a)^{1+\frac 1a}}+\varepsilon\delta_{ij}.
\end{equation}
This result can be easily extended to $\vect{q}=\vect0$. Indeed, the above formula for partial derivatives is clearly continuous on $\R^d\setminus\{\vect 0\}$ and since $a>0$ and $\abs{q_iq_j}\leq\abs{\vect q}^2$ for all $i,j\in\{1\dots,d\}$, we conclude $\abs{\vect{q}}^{a-2}q_iq_j\rightarrow 0$ as $\vect{q}\to\vect 0$. Thus $\vect{f}, \vect{f}^{\varepsilon}\in C^1(\R^d;\R^d)$. This proves the first assertion.

As the vectors $\vect{q}$ and $\vect{f}^{\varepsilon}(\vect{q})$ have the same direction, the formulae \eqref{prepis} follow. Furthermore, $\lim_{s\to 0^+} f (s)=0$, $\lim_{s\to \infty}f (s)=1$ and $f'(s)=(1+s^a)^{-\frac{1+a}{a}} >0$. Consequently, $f$ is a strictly increasing $C^1$-function mapping $[0,\infty)$ onto $[0,1)$ and, for any $\varepsilon>0$, $f_{\varepsilon}$ is a strictly increasing $C^1$-function mapping $[0,\infty)$ onto $[0,\infty)$. Hence the functions
$$
\vect{f}^{-1}(\vect{y}):=f^{-1}\left(|\vect{y}|\right)\frac{\vect{y}}{\abs{\vect{y}}} \quad \textrm{ and } \quad
\left(\vect{f}^{\varepsilon}\right)^{-1}(\vect{y}):=\left(f_{\varepsilon}\right)^{-1}\left(|\vect{y}|\right)\frac{\vect{y}}{\abs{\vect{y}}}
$$
are well defined inverse functions of $\vect{f}$ and $\vect{f}^{\varepsilon}$, respectively. It is straightforward to check that $\vect{f}^{-1}$ and $\left(\vect{f}^{\varepsilon}\right)^{-1}$ are continuously differentiable, which completes the proof of (ii) and (iii).
\end{proof}
Next, we set
\begin{equation}\label{dec:3}
    \mathbb{A}(\vect{q}):= \nabla_{\!\vect{q}}\vect{f} (\vect{q}) \quad \textrm{i.e.} \quad \mathbb{A}(\vect{q}) = \frac{(1+\abs{\vect{q}}^a)\mathbb{I}-\abs{\vect{q}}^{a-2}\bq\otimes\bq}{(1+\abs{\vect{q}}^a)^{1+\frac 1a}}
\end{equation}
and we focus on its (finer) properties. (In \eqref{dec:3}, $\mathbb{I}$ stands for the identity matrix and $(\bq\otimes\bq)_{ij} = q_iq_j$.)
\begin{lemma}[Scalar product generated by $\nabla_{\!\vect{q}} \vect f(\vect{q})$]\label{new_scalar_product}
Let $\vect{q}\in\R^d$ be arbitrary. The bilinear form on $\R^d$ given by
\begin{equation}\label{new_scalar_product_formula}
    (\vect{v},\vect{w})_{\mathbb{A}(\vect{q})}:=\vect{v}\cdot \mathbb{A}(\vect{q})\vect{w}=\sum_{i,j=1}^d v_i \frac{\partial f_i(\vect{q})}{\partial q_j} w_j=\frac{(1+\abs{\vect{q}}^a)\vect{v}\cdot \vect{w}-\abs{\vect{q}}^{a-2}(\vect{q}\cdot \vect{v}) (\vect{q}\cdot \vect{w})}{(1+\abs{\vect{q}}^a)^{1+\frac 1a}}
\end{equation}
is a scalar product on $\mathbb{R}^d$ satisfying
\begin{equation}\label{nerovnost_B}
(\vect{v},\vect{w})_{\mathbb{A}(\vect{q})} \leq 2\abs{\vect v}\abs{\vect w} \quad\text{for every $\vect{v},\vect{w}\in\R^d$}.
\end{equation}
The corresponding quadratic form $\oldnorm{\vect{v}}^2_{\mathbb{A}(\vect{q})}:=(\vect{v},\vect{v})_{\mathbb{A}(\vect{q})}$ fulfills
\begin{equation}\label{odhady_s_carkou_new}
\abs{\vect{v}}^2 \geq \frac{ \abs{\vect{v}}^2}{(1+\abs{\vect{q}}^a)^{\frac 1a}} \geq\oldnorm{\vect{v}}^2_{\mathbb{A}(\vect{q})} \ge \frac{|\vect{v}|^2}{(1+\abs{\vect{q}}^a)^{1+\frac 1a}}\qquad\text{for every $\vect{v}\in\R^d$}
\end{equation}
Hence, $\oldnorm{\cdot}_{\mathbb{A}(\vect{q})}$ is for fixed $\bq\in\R^d$ the norm on $\mathbb{R}^d$ equivalent to the Euclidean norm $|\cdot|$.
\end{lemma}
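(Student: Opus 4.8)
The plan is to verify the three claimed properties of the bilinear form $(\cdot,\cdot)_{\mathbb{A}(\vect{q})}$ in turn: that it is a genuine scalar product, the upper bound \eqref{nerovnost_B}, and the two-sided estimate \eqref{odhady_s_carkou_new}; equivalence of $\oldnorm{\cdot}_{\mathbb{A}(\vect{q})}$ with the Euclidean norm is then immediate from the last. Throughout I fix $\vect{q}\in\R^d$ and write $r=\abs{\vect{q}}$, treating the case $\vect{q}=\vect{0}$ (where $\mathbb{A}(\vect{0})=\mathbb{I}$ since the rank-one term vanishes, as noted in Lemma~\ref{derivace}) separately and trivially.

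First I would establish the two-sided bound \eqref{odhady_s_carkou_new} for the quadratic form, since the positive-definiteness needed to call $(\cdot,\cdot)_{\mathbb{A}(\vect{q})}$ a scalar product follows from its strictly positive lower bound. Symmetry of the bilinear form is clear from the symmetry of the matrix $\mathbb{A}(\vect{q})$ in \eqref{dec:3} (both $\mathbb{I}$ and $\bq\otimes\bq$ are symmetric), and bilinearity is obvious. For the quadratic form, plugging $\vect{w}=\vect{v}$ into \eqref{new_scalar_product_formula} gives
\begin{equation*}
\oldnorm{\vect{v}}^2_{\mathbb{A}(\vect{q})}=\frac{(1+r^a)\abs{\vect{v}}^2-r^{a-2}(\vect{q}\cdot\vect{v})^2}{(1+r^a)^{1+\frac1a}}.
\end{equation*}
By Cauchy--Schwarz, $0\le(\vect{q}\cdot\vect{v})^2\le r^2\abs{\vect{v}}^2$, so $r^{a-2}(\vect{q}\cdot\vect{v})^2$ lies between $0$ and $r^a\abs{\vect{v}}^2$. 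Using the upper end of this range in the numerator yields $\oldnorm{\vect{v}}^2_{\mathbb{A}(\vect{q})}\ge \abs{\vect{v}}^2/(1+r^a)^{1+\frac1a}$, and using the lower end ($r^{a-2}(\vect{q}\cdot\vect{v})^2\ge 0$) yields $\oldnorm{\vect{v}}^2_{\mathbb{A}(\vect{q})}\le(1+r^a)\abs{\vect{v}}^2/(1+r^a)^{1+\frac1a}=\abs{\vect{v}}^2/(1+r^a)^{\frac1a}$. The remaining inequality $\abs{\vect{v}}^2/(1+r^a)^{\frac1a}\le\abs{\vect{v}}^2$ holds because $(1+r^a)^{\frac1a}\ge1$. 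In particular the lower bound is strictly positive for $\vect{v}\ne\vect{0}$, so $(\cdot,\cdot)_{\mathbb{A}(\vect{q})}$ is positive definite, hence a scalar product, and $\oldnorm{\cdot}_{\mathbb{A}(\vect{q})}$ is a norm equivalent to $\abs{\cdot}$.

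It remains to prove the off-diagonal bound \eqref{nerovnost_B}. Here the natural route is to use that $\oldnorm{\cdot}_{\mathbb{A}(\vect{q})}$ is already known to be a norm dominated by $\abs{\cdot}$: by the Cauchy--Schwarz inequality for the scalar product $(\cdot,\cdot)_{\mathbb{A}(\vect{q})}$ together with the upper bound from \eqref{odhady_s_carkou_new},
\begin{equation*}
(\vect{v},\vect{w})_{\mathbb{A}(\vect{q})}\le\oldnorm{\vect{v}}_{\mathbb{A}(\vect{q})}\oldnorm{\vect{w}}_{\mathbb{A}(\vect{q})}\le\abs{\vect{v}}\abs{\vect{w}},
\end{equation*}
which is even sharper than the stated constant $2$; alternatively one can bound directly in \eqref{new_scalar_product_formula}, estimating the positive term $(1+r^a)\abs{\vect{v}}\abs{\vect{w}}$ against $(1+r^a)^{1+\frac1a}$ and discarding the negative term, which gives the factor $1$ as well (or the crude factor $2$ if one bounds $\abs{r^{a-2}(\vect{q}\cdot\vect{v})(\vect{q}\cdot\vect{w})}\le r^a\abs{\vect{v}}\abs{\vect{w}}$ and keeps both terms). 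I do not anticipate a genuine obstacle here: the only mild subtlety is the bookkeeping at $\vect{q}=\vect{0}$ and making sure the chain of inequalities in \eqref{odhady_s_carkou_new} is proved before invoking Cauchy--Schwarz for \eqref{nerovnost_B}, so that the argument is not circular; organizing the proof in the order (symmetry/bilinearity) $\to$ (two-sided quadratic bound, hence positive definiteness) $\to$ (Cauchy--Schwarz for the cross term) keeps everything clean.
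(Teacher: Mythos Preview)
Your proof is correct and follows the same route as the paper's (very terse) argument: symmetry is read off the explicit matrix, the two-sided bound \eqref{odhady_s_carkou_new} comes from the Cauchy--Schwarz estimate $0\le(\vect{q}\cdot\vect{v})^2\le|\vect{q}|^2|\vect{v}|^2$ applied to the numerator of \eqref{new_scalar_product_formula}, and \eqref{nerovnost_B} then follows. One small caveat in your parenthetical aside: the rank-one term $-|\vect{q}|^{a-2}(\vect{q}\cdot\vect{v})(\vect{q}\cdot\vect{w})$ need not be negative for $\vect{v}\neq\vect{w}$, so ``discarding the negative term'' does not give a direct bound with constant $1$; your primary argument via Cauchy--Schwarz for $(\cdot,\cdot)_{\mathbb{A}(\vect{q})}$ (or the crude direct bound yielding the constant $2$) is the right way.
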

\begin{proof}The proof follows from the definition of $\vect{f}$, the formula \eqref{prvni_cast} for its derivatives, \eqref{new_scalar_product_formula} and the Cauchy-Schwarz inequality. The inequalities in \eqref{odhady_s_carkou_new} are direct consequences of  \eqref{new_scalar_product_formula}.
\end{proof}
The last essential property we need in the proof is the strict monotonicity of $\vect{f}$, the strong monotonicity of $\vect{f}^{\varepsilon}$ and, consequently, the Lipschitz continuity of its inverse function $(\vect{f}^{\varepsilon})^{-1}$.
\begin{lemma}\label{monotonie}
The mappings $\vect{f}, \vect{f}^{\varepsilon}:\R^d\to\R^d$ defined in \eqref{dec:1} and \eqref{dec:2} satisfy, for all $\varepsilon\in(0,1)$,
\begin{align}
\bigl(\vect{f}(\vect{q}_1)-\vect{f}(\vect{q}_2)\bigr)\cdot(\vect{q}_1-\vect{q}_2) &>0 &&\textrm{for all } \vect{q}_1,\vect{q}_2\in\R^d, \vect{q}_1\neq \vect{q}_2, \label{dec:5}\\
\bigl(\vect{f}^{\varepsilon}(\vect{q}_1)-\vect{f}^{\varepsilon}(\vect{q}_2)\bigr)\cdot(\vect{q}_1-\vect{q}_2) &\ge \varepsilon |\vect{q}_1 - \vect{q}_2|^2 &&\textrm{for all } \vect{q}_1,\vect{q}_2\in\R^d. \label{dec:6}
\end{align}
Moreover, for any $\varepsilon>0$, the inverse function $\left(\vect{f}^{\varepsilon}\right)^{-1}$ is uniformly Lipschitz continuous on $\R^d$, namely,
\begin{align}\label{dec:19}
    \abs{(\vect{f}^{\varepsilon})^{-1}(\vect{y}_1)-(\vect{f}^{\varepsilon})^{-1}(\vect{y}_2)} &\le \frac{1}{\varepsilon} \abs{\vect{y}_1-\vect{y}_2} &&\textrm{for all } \vect{y}_1,\vect{y}_2\in\R^d.
\end{align}
\end{lemma}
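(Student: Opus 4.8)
The plan is to prove Lemma~\ref{monotonie} by exploiting the radial structure of $\vect{f}$ recorded in \eqref{prepis} together with the explicit formula \eqref{prvni_cast} for its derivative. For the strict monotonicity \eqref{dec:5}, I would first reduce the vectorial inequality to a one-dimensional statement. Writing $\vect{q}_1,\vect{q}_2\in\R^d$, the natural route is to use the fundamental theorem of calculus along the segment $[\vect{q}_2,\vect{q}_1]$: set $\vect{r}(t):=\vect{q}_2+t(\vect{q}_1-\vect{q}_2)$ and note that
\begin{equation*}
\bigl(\vect{f}(\vect{q}_1)-\vect{f}(\vect{q}_2)\bigr)\cdot(\vect{q}_1-\vect{q}_2)=\int_0^1 (\vect{q}_1-\vect{q}_2)\cdot\mathbb{A}(\vect{r}(t))(\vect{q}_1-\vect{q}_2)\dd t=\int_0^1 \oldnorm{\vect{q}_1-\vect{q}_2}^2_{\mathbb{A}(\vect{r}(t))}\dd t,
\end{equation*}
which by the lower bound in \eqref{odhady_s_carkou_new} is bounded below by $\int_0^1 |\vect{q}_1-\vect{q}_2|^2(1+|\vect{r}(t)|^a)^{-1-1/a}\dd t>0$ whenever $\vect{q}_1\neq\vect{q}_2$. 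This simultaneously gives \eqref{dec:5} and, adding the contribution of the $\varepsilon\vect{q}$ term (whose derivative is $\varepsilon\mathbb{I}$), yields \eqref{dec:6} with the stated constant $\varepsilon$, since $\oldnorm{\vect{v}}^2_{\mathbb{A}(\vect{r})}\ge 0$.

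For the Lipschitz bound \eqref{dec:19} on $(\vect{f}^{\varepsilon})^{-1}$, I would argue directly from \eqref{dec:6}. Given $\vect{y}_1,\vect{y}_2\in\R^d$, put $\vect{q}_i:=(\vect{f}^{\varepsilon})^{-1}(\vect{y}_i)$, which exist and are unique by Lemma~\ref{derivace}(iii). Then \eqref{dec:6} and the Cauchy--Schwarz inequality give
\begin{equation*}
\varepsilon|\vect{q}_1-\vect{q}_2|^2\le\bigl(\vect{f}^{\varepsilon}(\vect{q}_1)-\vect{f}^{\varepsilon}(\vect{q}_2)\bigr)\cdot(\vect{q}_1-\vect{q}_2)=(\vect{y}_1-\vect{y}_2)\cdot(\vect{q}_1-\vect{q}_2)\le|\vect{y}_1-\vect{y}_2|\,|\vect{q}_1-\vect{q}_2|,
\end{equation*}
and dividing by $|\vect{q}_1-\vect{q}_2|$ (the case $\vect{q}_1=\vect{q}_2$ being trivial) produces $|\vect{q}_1-\vect{q}_2|\le\varepsilon^{-1}|\vect{y}_1-\vect{y}_2|$, i.e. \eqref{dec:19}.

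I do not expect a serious obstacle here; everything follows from the explicit derivative formula and the scalar-product estimates already established in Lemma~\ref{new_scalar_product}. The only mild subtlety is the behaviour at the origin: the integrand in the representation above involves $|\vect{r}(t)|^{a-2}$ which is singular at $\vect{r}(t)=\vect{0}$ when $a<2$, so strictly speaking one should either invoke the $C^1$-regularity of $\vect{f}$ proved in Lemma~\ref{derivace}(i) (so that $\mathbb{A}=\nabla_{\!\vect{q}}\vect{f}$ extends continuously and the fundamental theorem of calculus applies without issue) or simply observe that the segment passes through $\vect{0}$ for at most one value of $t$, a null set, while the lower bound $\oldnorm{\cdot}^2_{\mathbb{A}(\vect{r})}\ge|\cdot|^2(1+|\vect{r}|^a)^{-1-1/a}$ from \eqref{odhady_s_carkou_new} holds for every $\vect{r}\ne\vect{0}$ with a locally bounded right-hand side, so the integral is still strictly positive. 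An alternative, perhaps cleaner, proof of \eqref{dec:5} avoids integration entirely: by the radial representation \eqref{prepis}, monotonicity of $\vect{f}$ reduces to showing that the map $s\mapsto f(s)/s$ times the usual inner-product structure is monotone, which follows from the classical fact that $\vect{q}\mapsto\alpha(|\vect{q}|)\vect{q}$ is strictly monotone as soon as $s\mapsto s\alpha(s)$ is strictly increasing and $\alpha\ge 0$; here $s\alpha(s)=f(s)$ is strictly increasing by the computation $f'(s)=(1+s^a)^{-(1+a)/a}>0$ already recorded in the proof of Lemma~\ref{derivace}.
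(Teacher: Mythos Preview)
Your proposal is correct and follows essentially the same route as the paper: both use the fundamental theorem of calculus along the segment $[\vect{q}_2,\vect{q}_1]$ together with the positive-definiteness of $\mathbb{A}(\cdot)$ from Lemma~\ref{new_scalar_product} to obtain \eqref{dec:5} and \eqref{dec:6}, and then derive \eqref{dec:19} from \eqref{dec:6} via Cauchy--Schwarz. Your extra care about the singularity of $\mathbb{A}$ at the origin and the alternative radial argument are reasonable side remarks, but the paper simply relies on the $C^1$-regularity of $\vect{f}$ established in Lemma~\ref{derivace}\ref{prvni_cast_item}, which already makes the integral representation valid everywhere.
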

\begin{proof}
We first observe, using also \eqref{dec:3}, that (for $\vect{q}_1\neq\vect{q}_2$)
\begin{align*}
    \bigl(\vect{f}^{\varepsilon}(\vect{q}_1)&-\vect{f}^{\varepsilon}(\vect{q}_2)\bigr) \cdot(\vect{q}_1-\vect{q}_2)= \int_0^1\frac{\mathrm{d}}{\mathrm{d}s}\vect{f}^{\varepsilon}(\vect{q}_2+s(\vect{q}_1-\vect{q}_2)) \dd s\cdot (\vect{q}_1-\vect{q}_2) \\ &=\int_0^1\mathbb{A}(\vect{q}_2+s(\vect{q}_1-\vect{q}_2)) (\vect{q}_1-\vect{q}_2) \cdot (\vect{q}_1-\vect{q}_2) \dd s + \varepsilon |\vect{q}_1-\vect{q}_2|^2
    > \varepsilon |\vect{q}_1-\vect{q}_2|^2,
 \end{align*}
which gives the strong monotonicity of $\vect{f}^{\varepsilon}$ and strict monotonicity of $\vect{f}$. Since
$$
\bigl(\vect{f}^{\varepsilon}(\vect{q}_1)-\vect{f}^{\varepsilon}(\vect{q}_2)\bigr) \cdot(\vect{q}_1-\vect{q}_2)
\le
\abs{\vect{f}^{\varepsilon}(\vect{q}_1)-\vect{f}^{\varepsilon}(\vect{q}_2)}\abs{\vect{q}_1-\vect{q}_2},$$
we conclude from the last two inequalities that $\varepsilon \abs{\vect{q}_1-\vect{q}_2} \le \abs{\vect{f}^{\varepsilon}(\vect{q}_1)-\vect{f}^{\varepsilon}(\vect{q}_2)}$, which is equivalent to \eqref{dec:19}.
\end{proof}

\section{Proof of uniqueness}\label{uniqueness_section}

\noindent In this short section, we shall prove that there is at most one weak solution to the problem \eqref{zadani_classic}.

Let us assume that there are two weak solutions $(u_1,\vect{q}_1)$ and $(u_2,\vect{q}_2)$ to the problem \eqref{zadani_classic} with the same initial value $u_0\in L^2(\Omega)$ and the same right-hand side $g\in L^2(Q)$. Note that the constitutive equation \eqref{constit_old} implies that $\nabla u_1, \nabla u_2 \in L^{\infty}(Q)$ and consequently $u_1$ and $u_2$ are admissible test function in \eqref{rovnice}. Subtracting \eqref{rovnice} for $(u_2,\vect{q}_2)$ from the same equation for $(u_1,\vect{q}_1)$ and taking $\varphi=u_1(t,\cdot)-u_2(t,\cdot)$ as a test function, we obtain
\begin{equation}\label{dec:4}
\int_{\Omega} (\partial_tu_1-\partial_tu_2)(u_1-u_2) + (\bq_1-\bq_2)\cdot \left(\nabla u_1-\nabla u_2\right) \dd x=0\qquad\text{for a.a.~}t\in(0,T).
\end{equation}
By \eqref{constit_old}, $\nabla u_1-\nabla u_2 = \vect{f}(\bq_1) - \vect{f}(\bq_2)$. Inserting this relation into \eqref{dec:4}, we obtain
$$
\frac12 \ddt \|u_1-u_2\|_{L^2(\Omega)}^2 + \int_{\Omega} (\vect{f}(\bq_1) - \vect{f}(\bq_2)) \cdot (\bq_1- \bq_2) \dd x = 0.
$$
Integrating this with respect to time $t\in (0, T]$ and using $u_1(0,x) - u_2(0,x) = 0$ a.e. in $\Omega$ we arrive at
$$
\frac12 \|u_1(t, \cdot)-u_2(t, \cdot)\|_{L^2(\Omega)}^2 + \int_0^t \int_{\Omega} \left(\vect{f}(\bq_1) - \vect{f}(\bq_2) \right) \cdot  (\bq_1- \bq_2) \dd x \dd s = 0.
$$
By taking $t=T$ and using the strict monotonicity of $\vect{f}$, see \eqref{dec:5}, the second term leads to the conclusion that $\bq_1=\bq_2$ a.e. in $(0,T)\times\Omega$. The first term then implies that, for all $t\in (0,T]$, $u_1(t,\cdot) = u_2(t,\cdot)$ a.e. in $\Omega$.
This completes the proof of uniqueness.

\section{\texorpdfstring{$\varepsilon$}{e}-approximations and their properties}\label{galerkin_section}

\noindent In this section, we introduce, for any $\varepsilon\in (0,1)$, an $\varepsilon$-approximation of the problem \eqref{zadani_classic} and show, by means of the Galerkin method and regularity techniques performed at the Galerkin level, that this  $\varepsilon$-approximation admits a unique weak solution with second spatial derivatives in $L^2(Q)$.

Let $\varepsilon\in (0,1)$ and $a>0$. We say that a couple of $\Omega$-periodic functions $(u,\bq)=(u^{\varepsilon},\bq^{\varepsilon})$ solves the  \mbox{$\varepsilon$-approximation} of the problem \eqref{zadani_classic} if
\begin{subequations}\label{zadani_eps}
\begin{align}
    \label{rovnice_eps}
    \partial_t u-\di  \vect{q}  &= g &&\textrm{in }Q,\\
    \label{constit_eps}
    \nabla u&=\frac{\vect{q}}{(1+\abs{\vect{q}}^a)^{\frac 1a}} + \varepsilon \bq = \vect{f}(\bq) + \varepsilon \bq = \vect{f}^{\varepsilon}(\bq)&&\textrm{in }Q,\\
    \label{initial_eps}
u(0, \cdot)&=u_0&& \textrm{in }\Omega.
\end{align}
\end{subequations}

In accordance with the assumptions of Theorem \ref{main_thm}, we assume that $u_0 \in W^{1,\infty}_{per}(\Omega)$ satisfies \eqref{flux_small_data} and $g\in L^2(Q)$. We say that a couple $(u,\bq) = (u^{\varepsilon},\bq^{\varepsilon})$ is \emph{weak solution to \eqref{zadani_eps}} if
\begin{equation}\label{dec:34}
\begin{aligned}
u&\in L^2\left(0,T; W^{2,2}_{per}(\Omega)\right),\\
\partial_t u&\in L^2\left(0,T; L^2(\Omega)\right), \\
\bq&\in L^2\left(0,T; L^2\left(\Omega; \mathbb{R}^d\right)\right)
\end{aligned}
\end{equation}
and
\begin{subequations}\label{wf-eps}
\begin{align}
       \int_{\Omega} \partial_t u \,\varphi + \bq \cdot \nabla \varphi \dd x&=\int_{\Omega} g \, \varphi\dd x &&\textrm{for all $\varphi\in W^{1,2}_{per}(\Omega)$ and a.a. $t\in (0,T)$},\label{wf-rovnice}
    \\
    \nabla u&=\vect{f}^{\varepsilon}(\vect{q}) &&\textrm{a.e. in }Q,\label{wf-ce}
    \\
     \Vert u(t,\cdot)-u_0\Vert_{L^2(\Omega)}&\xrightarrow{t\to 0^+}0. \label{wf-in}
\end{align}
\end{subequations}
Uniqueness of such a solution follows from the same argument as in Section \ref{uniqueness_section}. To establish the existence of the solution, we apply the Galerkin method combined with higher differentiability estimates that we will perform at the level of Galerkin approximations.
These estimates and the limit from the Galerkin approximation to the continuous level represent the core of this section. In Subsect. \ref{apriori_revisited}, we establish and summarize the estimates that are uniform with respect to $\varepsilon$.

\subsection{Galerkin approximations}\label{Galerkin} Consider the basis $\left\{\omega_r\right\}_{r=1}^{\infty}$ in $W_{per}^{1,2}(\Omega)$ consisting of solutions
of the following spectral problem:
\begin{align}\label{Dec:GS}
    \int_{\Omega}\nabla\omega_r\cdot\nabla\varphi\dd x&=\lambda_r\int_{\Omega}\omega_r\varphi\dd x \qquad\text{~for all~} \varphi\in W^{1,2}_{per}(\Omega).
\end{align}
It is well-known (see e.g. \cite{temam} or \cite[Appendix A.4]{mnrr96}) that there is a non-decreasing sequence of (positive) eigenvalues $\{\lambda_r\}_{r=1}^{\infty}$ and a corresponding set of eigenfunctions $\left\{\omega_r\right\}_{r=1}^{\infty}$ that are orthogonal in $W^{1,2}_{per}(\Omega)$ and orthonormal in $L^2_{per}(\Omega)$. Moreover, the projections  $\mathcal{P}^N$ defined through $\mathcal{P}^N(u)=\sum_{i=1}^N \left(\int_{\Omega} u \omega_i\dd x\right)\omega_i$ are continuous both as mappings from $L^2_{per}(\Omega)$ to $L^2_{per}(\Omega)$ and from $W^{1,2}_{per}(\Omega)$ to $W^{1,2}_{per}(\Omega)$.
Also, due to $\Omega$-periodicity and elliptic regularity, the $\Omega$-periodic extensions of $\omega_r$ belong to $C^{\infty}(\R^d)$.

Before introducing the Galerkin approximations of the problem \eqref{wf-eps} we recall, referring to Lemma \ref{derivace}, that the relation $\nabla u = \vect{f}^{\varepsilon}(\bq)$ is equivalent to $\bq = (\vect{f}^{\varepsilon})^{-1}(\nabla u)$ where $(\vect{f}^{\varepsilon})^{-1}$ is a Lipschitz mapping from $\mathbb{R}^d$ to $\mathbb{R}^d$.

For an arbitrary, fixed $N\in \N$ , we look for $u^N$ in the form
\begin{equation}\label{galerkin_def}\notag
    u^N(t,x)=\sum_{r=1}^Nc_r^N(t)\, \omega_r(x),
\end{equation}
where the coefficients  $c_r^N$, $r=1, \dots, N$, are determined as the solution of the system of ordinary differential equations of the form
\begin{subequations}\label{galerkin_0}
\begin{align}\label{galerkin1}
   \int_{\Omega}\partial_t{u}^N \omega_r + \vect{q}^N \cdot \nabla\omega_r\dd x&=\int_{\Omega}g\,\omega_r\dd x, \quad r=1,\ldots, N, \qquad \textrm{ where } \quad \vect{q}^N := (\vect{f}^{\varepsilon})^{-1}(\nabla u^N),  \\
   \label{galerkin2}
    u^N(0, \cdot)&= \mathcal{P}^N(u_0) \qquad \iff \qquad c^N_r(0)= \int_{\Omega} u_0 \omega_r \dd x \qquad r=1,\dots,N.
\end{align}
\end{subequations}
The local-in-time well-posedness of the above problem \eqref{galerkin_0} directly follows from Caratheodory theory (recall here that $\left(\vect{f}^{\varepsilon}\right)^{-1}$ is a Lipschitz mapping). In addition, thanks to the first uniform estimates established in the next subsection, we deduce that the Galerkin system~\eqref{galerkin_0} is well-posed on $(0,T]$.

\subsection{First uniform estimates}\label{apriori_estimates}
Multiplying the $r$-th equation in~\eqref{galerkin1} by $c_r$ and summing these equations up for $r=1,\dots,N$, we obtain
\begin{equation}\label{apriori1_dif}\notag
    \frac 12 \ddt \Abs{ u^N}_{L^2(\Omega)}^2+\int_{\Omega}\vect{q}^N \cdot \nabla u^N\dd x=\int_{\Omega}  g\, u^N\dd x.
\end{equation}
Using the one-to-one correspondence between $\vect{q}^N$ and $\nabla u^N$, see \eqref{galerkin1}, the second term on the left-hand side can be evaluated explicitly and the above equation takes the form
\begin{equation}\label{apriori1_difA}\notag
    \frac 12 \ddt \Abs{ u^N}_{L^2(\Omega)}^2+\int_{\Omega}\frac{\abs{\vect{q}^N}^2}{\left(1+\abs{\vect{q}^N}^a\right)^{\frac 1a}} +\varepsilon \left|\vect{q}^N\right|^2 \dd x=\int_{\Omega}  g\, u^N\dd x \le \frac12 \|g\|_{L^2(\Omega)}^2 + \frac12 \left\|u^N\right\|_{L^2(\Omega)}^2.
\end{equation}
Integrating over time, using then the Gronwall inequality and the fact that  $\|\mathcal{P}^N u_0\|_{L^2(\Omega)}\le \|u_0\|_{L^2(\Omega)}$, we obtain
 \begin{equation}\label{uniform}
  \sup_{t\in(0,T)} \Abs{u^N(t,\cdot)}_{L^2(\Omega)}^2+ \int_0^T\int_{\Omega}\frac{\abs{\vect{q}^N}^2}{\left(1+\abs{\vect{q}^N}^a\right)^{\frac 1a}} +\varepsilon \left|\vect{q}^N\right|^2 \dd x\dd t\leq \mathcal{C}\!\left(\Abs{ u_0}_{L^2(\Omega)},\Abs{ g}_{L^2(Q)}\right).\footnote{By symbols such as $\mathcal{C}\!\left(\Abs{ u_0}_{L^2(\Omega)},\Abs{ g}_{L^2(Q)}\right)$ we indicate the dependence of the finite upper bound on ``relevant" parameters (i.e. $u_0$, $g$, $a$ and auxiliary parameters introduced in the proof such as $\varepsilon$). The value of this bound can change from line to line.}
\end{equation}
In addition, it also directly follows from $\nabla u^N = \vect{f}^{\varepsilon}(\bq^N)$ (see the second equation in \eqref{galerkin1}) and the above $L^2$ estimate on $\bq^N$ that
\begin{equation}\label{uniform-b}
  \int_0^T\int_{\Omega}\left|\nabla u^N\right|^2 \dd x\dd t\leq \mathcal{C}\!\left(\Abs{ u_0}_{L^2(\Omega)},\Abs{ g}_{L^2(Q)}\right).
\end{equation}

\subsection{Time derivative estimate (uniform with respect to \texorpdfstring{$N$}{N})}\label{first_time_der}
Multiplying the $r$-th equation in~\eqref{galerkin1} by $\ddt c_r$ and summing these equations up for $r=1,\dots,N$, we obtain
\begin{equation*}
   \int_{\Omega} \left|\partial_t u^N\right|^2 +\vect{q}^N \cdot \partial_t \left(\nabla u^N\right)\dd x=\int_{\Omega}  g\, \partial_t u^N\dd x.
\end{equation*}
Applying Young's inequality to the term on the right-hand side, we get
\begin{equation}\label{apriori1_tt1}
   \int_{\Omega} \left|\partial_t u^N\right|^2 +2\vect{q}^N \cdot \partial_t \left(\nabla u^N\right)\dd x\le \int_{\Omega}  |g|^2 \dd x.
\end{equation}
Next, we focus on the second term on the left-hand side. Since $\nabla u^N = \vect{f}^{\varepsilon}(\bq^N)$, it follows from the definition of $\vect{f}^{\varepsilon}$ that
\begin{align*}
\vect{q}^N \cdot \partial_t \left(\nabla u^N\right)&= \partial_t \left( \vect{q}^N \cdot \nabla u^N\right)- \partial_t \vect{q}^N \cdot \nabla u^N\\
&=\partial_t \left(\frac{\abs{\vect{q}^N}^2}{\left(1+\abs{\vect{q}^N}^a\right)^{\frac 1a}} +\varepsilon \left|\vect{q}^N\right|^2 \right)- \partial_t \vect{q}^N \cdot \left(\frac{\vect{q}^N}{\left(1+\abs{\vect{q}^N}^a\right)^{\frac 1a}} +\varepsilon \vect{q}^N \right)\\
&=\frac{\varepsilon}{2} \partial_t\left( \abs{\bq^N}^2\right)  + \partial_t \left(\frac{\abs{\vect{q}^N}^2}{\left(1+\abs{\vect{q}^N}^a\right)^{\frac 1a}}  \right)- \partial_t \left(\abs{\vect{q}^N}\right)  \frac{\abs{\bq^N}}{\left(1+\abs{\vect{q}^N}^a\right)^{\frac 1a}} \\
&=\frac{\varepsilon}{2} \partial_t \left(\abs{\bq^N}^2\right) + \partial_t \int_0^{\abs{\bq^N}} \left(\frac{\abs{\vect{q}^N}}{\left(1+\abs{\vect{q}^N}^a\right)^{\frac 1a}}  -\frac{s}{(1+s^a)^{\frac 1a}}  \right)\dd s.
\end{align*}
Inserting the result of this computation into  \eqref{apriori1_tt1}, integrating the result over  $(0,T)$, and using the fact that the function
$$
s\mapsto \frac{s}{(1+s^a)^{\frac 1a}}
$$
is increasing (implying that $\abs{\vect{q}^N}(1+\abs{\vect{q}^N}^a)^{-\frac 1a}-{s}{(1+s^a)^{-\frac 1a}} \ge 0$ on $(0, \abs{\vect{q}^N}$), we obtain that \begin{align*}
\int_0^T\! &\int_{\Omega} \abs{\partial_t u^N}^2 \dd x \dd t \le \int_0^T\!\int_{\Omega} |g|^2 - 2\bq^N \cdot \partial_t\left(\nabla u^N\right)\dd x \dd t\\ &=  \int_0^T\!\int_{\Omega} |g|^2 \dd x \dd t -\left[\int_{\Omega}\varepsilon\abs{\bq^N(t,x)}^2+2\!\int_0^{\abs{\bq^N(t,x)}} \!\!\left(\frac{\abs{\vect{q}^N(t,x)}}{\left(1+\abs{\vect{q}^N(t,x)}^a\right)^{\frac 1a}}  -\frac{s}{(1+s^a)^{\frac 1a}}  \right)\!\dd s\dd x\right]_{t=0}^{t=T}
\\&\le \int_0^T\!\int_{\Omega} |g|^2 \dd x \dd t+ \int_{\Omega}\varepsilon\abs{\bq^N(0,x)}^2 + 2\int_0^{\abs{\bq^N(0,x)}} \!\!\left(\frac{\abs{\vect{q}^N(0,x)}}{\left(1+\abs{\vect{q}^N(0,x)}^a\right)^{\frac 1a}}  -\frac{s}{(1+s^a)^{\frac 1a}} \dd s\right)\dd x. 
\end{align*}
Noticing that $\abs{\vect{q}^N}(1+\abs{\vect{q}^N}^a)^{-\frac 1a}-{s}{(1+s^a)^{-\frac 1a}} \le 1$ on $(0, \abs{\vect{q}^N})$
we conclude that
\begin{equation}\label{fitme-f}
\begin{split}
&\int_0^T \int_{\Omega} \abs{\partial_t u^N}^2 \dd x \dd t \le
\Abs{g}^2_{L^2(Q)} +\varepsilon \Abs{ \bq^{N}(0,\cdot)}_{L^2(\Omega;\mathbb{R}^d)}^2+2\Abs{\bq^{N}(0,\cdot)}_{L^1(\Omega;\mathbb{R}^d)},
\end{split}
\end{equation}
where
\begin{equation}\label{dec:16}
   \bq^N(0,x) = \left(\vect{f}^{\varepsilon}\right)^{-1}(\nabla \mathcal{P}^N (u_0(x))) \quad \iff \quad \nabla \mathcal{P}^N (u_0) = \frac{\vect{q}^N(0,\cdot)}{\left(1+\abs{\vect{q}^N(0,\cdot)}^a\right)^{\frac 1a}} + \varepsilon \vect{q}^N(0,\cdot).
\end{equation}
Consequently,
$$
  |\vect{q}^N(0,\cdot)| \le \frac{1}{\varepsilon} |\nabla \mathcal{P}^N (u_0)|,
$$
which implies that
$$
\|\bq^N(0,\cdot)\|_{L^1(\Omega;\mathbb{R}^d)} \le |\Omega|^{1/2} \|\bq^N(0,\cdot)\|^{1/2}_{L^2(\Omega;\mathbb{R}^d)} \le \frac{1}{\varepsilon} |\Omega|^{1/2} \|\nabla \mathcal{P}^N(u_0)\|^{1/2}_{L^2(\Omega;\mathbb{R}^d)}.
$$
The fact that $\|\mathcal{P}^N (u_0)\|_{W^{1,2}_{per}(\Omega)} \le \|u_0\|_{W^{1,2}_{per}(\Omega)}$ thus finally yields
\begin{equation}\label{fitme-fb}
\begin{split}
&\int_0^T \int_{\Omega} \abs{\partial_t u^N}^2 \dd x \dd t \le \mathcal{C}\!\left(\varepsilon^{-1}, \|g\|_{L^2(Q)}, \|u_0\|_{W^{1,2}_{per}(\Omega)}\right).
\end{split}
\end{equation}

\subsection{Spatial derivative estimates}\label{space_der}~This time, we multiply the $r$th equation in \eqref{galerkin1} by $\lambda_r c_r$ and sum the obtained identities up for $r=1,\dots,N$. Since, due to \eqref{Dec:GS} and the smoothness of $\omega^r$,
$$
\lambda_r \int_{\Omega} \omega_r \varphi \dd x = \int_{\Omega}\nabla\omega_r\cdot\nabla\varphi\dd x =- \int_{\Omega}\Delta \omega_r\varphi\dd x \qquad\text{~for all~} \varphi\in W^{1,2}_{per}(\Omega),
$$
we get
\begin{align}\notag
    \int_{\Omega}\partial_t \nabla u^N \cdot \nabla u^N + \nabla \bq^N\cdot \nabla^2 u^N \dd x&=-\int_{\Omega} g \, \Delta u^N \dd x.
\end{align}
Hence,
\begin{equation}\label{rovnost_s_gradientem11}
  \ddt \Abs{\nabla u^N}_{L^2(\Omega;\mathbb{R}^d)}^2  + 2\int_{\Omega}\nabla \bq^N \cdot \nabla^2 u^N \dd x=-2\int_{\Omega}  g \, \Delta u^N \dd x\le 2\|g\|_{L^2(\Omega)}\Abs{\nabla^2 u^N}_{L^2(\Omega; \mathbb{R}^{d\times d})}.
\end{equation}
Since $\nabla u^N = \vect{f}^{\varepsilon}(\vect{q}^N)$, recalling \eqref{dec:3} we get
$$
  \nabla ^2 u^N = \mathbb{A}(\vect{q}^N) \nabla \vect{q}^N + \varepsilon \nabla \vect{q}^N.
$$
Hence, by Lemma \ref{new_scalar_product}, we get
\begin{equation}\label{dec:7}
\nabla \bq^N \cdot \nabla ^2 u^N = \nabla \bq^N \cdot \mathbb{A}(\bq^N)\nabla \bq^N + \varepsilon |\nabla \bq^N|^2 = \oldnorm{\nabla \bq^N}^2_{\mathbb{A}(\vect{q}^N)} + \varepsilon |\nabla \bq^N|^2
\end{equation}
and also, by means of the Cauchy-Schwarz inequality  and \eqref{odhady_s_carkou_new},
\begin{align*}
\abs{\nabla^2 u^N}^2 &= \mathbb{A}(\vect{q}^N) \nabla \bq^N \cdot \nabla^2 u^N + \varepsilon \nabla \bq^N \cdot \nabla^2 u^N \le \oldnorm{\nabla \bq^N}_{\mathbb{A}(\vect{q}^N)} \oldnorm{\nabla ^2 u^N}_{\mathbb{A}(\vect{q}^N)} + \varepsilon |\nabla \bq^N|\, |\nabla ^2 u^N| \\
& \le \oldnorm{\nabla \bq^N}_{\mathbb{A}(\vect{q}^N)} |\nabla ^2 u^N| + \varepsilon |\nabla \bq^N|\, |\nabla ^2 u^N|,
\end{align*}
which, using $\varepsilon^2 < \varepsilon$, implies that
\begin{equation}\label{dec:8}
  |\nabla^2 u^N|^2 \le 2( \oldnorm{\nabla \bq^N}^2_{\mathbb{A}(\vect{q}^N)} + \varepsilon |\nabla \bq^N|^2).
\end{equation}
Incorporating \eqref{dec:7} and \eqref{dec:8} into \eqref{rovnost_s_gradientem11}, integrating the result with respect to time and using Young's inequality and the continuity of $\mathcal{P}^N$ in $W^{1,2}_{per}(\Omega)$, we arrive at estimates that are uniform with respect to both $N$ and $\varepsilon$:
\begin{align}\label{odhady_gradient}
\begin{split}
\sup_{t\in(0,T)}\Abs{ \nabla u^N(t,\cdot)}_{L^2(\Omega;\mathbb{R}^d)}^2+ \int_0^T \int_{\Omega}\oldnorm{\nabla \bq^N}^2_{\mathbb{A}(\vect{q}^N)} +\varepsilon \abs{\nabla \bq^N}^2+\abs{\nabla^2 u^N}^2\dd x \dd t\\
\leq \mathcal{C}\!\left(\Abs{g}_{L^2(Q)}\!,\Abs{u_0}_{W^{1,2}_{per}(\Omega)}\right).
\end{split}
\end{align}
\subsection{Limit \texorpdfstring{$N\to \infty$}{Ntoinfty}}
Due to the reflexivity and separability of the underlying function spaces and the Aubin-Lions compactness lemma, it follows from the estimates \eqref{uniform}, \eqref{uniform-b}, \eqref{fitme-fb} and \eqref{odhady_gradient} that there is a subsequence of $\left\{(u^N, \vect{q}^N)\right\}_{N=1}^{\infty}$ (which we do not relabel) such that 
\begin{subequations}\label{limity_N}
\begin{align}
    \label{u_slabe_N}
    u^N&\rightharpoonup u &&\text{~weakly in~} L^2\left(0,T; W^{2,2}_{per}(\Omega)\right),\\ \label{casova_slabe_N}
    \partial_t{u}^N&\rightharpoonup \, \partial_t {u} &&\text{~weakly in~} L^{2}\left(0,T;L^{2}(\Omega)\right), \\ \label{strong_u}
    u^N&\to u &&\text{~strongly in~} L^2\left(0,T; W^{1,2}_{per}(\Omega)\right) \cap C\left([0,T];L^2(\Omega)\right), \\
    \label{q_slabe_N}  \vect{q}^N&\rightharpoonup \vect{q} &&\text{~weakly in~} 
    L^2\pigl(0,T;W_{per}^{1,2}\left(\Omega;\mathbb{R}^d\right)\pigr).
\end{align}
\end{subequations} 
Letting $N\to \infty$ in \eqref{galerkin_0}, it is simple to conclude from the above convergence results that
\begin{align}
\begin{split}\label{limita_rce_N_elegant}
    \int_{\Omega}\partial_t u \, \varphi+\vect{q}\cdot \nabla\varphi \dd x&=\int_{\Omega} g \, \varphi\dd x \qquad  \textrm{for all } \varphi\in W^{1,2}_{per}(\Omega) \textrm{ and a.a. } t\in (0,T].
    \end{split}
\end{align}
Since $u^N(0, \cdot)=\mathcal{P}^N(u_0)$, $\mathcal{P}^N(u_0) \to u_0$ in $L^2(\Omega)$ and $u\in C\left([0,T];L^2(\Omega)\right)$, we observe that \eqref{wf-in} holds.

By virtue of \eqref{strong_u} there  is a subsequence (that we again do not relabel) so that
\begin{equation}\label{dec:18}
    \nabla u^N\xrightarrow{N\to\infty}\nabla u\quad\text{a.e. in } Q.
\end{equation}
As $\left(\vect{f}^{\varepsilon}\right)^{-1}$ is (Lipschitz) continuous, it follows from the second equation in \eqref{galerkin1} and \eqref{dec:18} that
\begin{equation*}
    \vect{q}^N = \left(\vect{f}^{\varepsilon}\right)^{-1}\left(\nabla u^N\right) \xrightarrow{N\to\infty}  \left(\vect{f}^{\varepsilon}\right)^{-1}\left(\nabla u\right) \quad\text{a.e. in } Q.
\end{equation*} Since the weak limit in $L^2(Q)$ coincides with the pointwise limit a.e. in $Q$ (provided that these limits exist), we conclude that
\begin{equation}\label{constit_N_done}
    \left(\vect{f}^{\varepsilon}\right)^{-1}\left(\nabla u\right)=\bq\quad \text{a.e. in~}Q\quad\Longrightarrow \quad\nabla u=\vect{f}^{\varepsilon}(\vect{q}) \quad \text{a.e. in~}Q.
\end{equation}
Thus, the existence and uniqueness of a weak solution to the $\varepsilon$-approximation \eqref{zadani_eps} in the sense of definition \eqref{wf-eps} is completed.

In the next subsection, we establish and summarize the estimates associated with the $\varepsilon$-approximation \eqref{zadani_eps} that are uniform with respect to $\varepsilon$.


\subsection{\texorpdfstring{$\varepsilon$}{}-independent estimates for \texorpdfstring{$(u^{\varepsilon}, \vect{q}^{\varepsilon})$}{u,q}} \label{apriori_revisited} Observing that $u^{\varepsilon}$ is an admissible test function in \eqref{limita_rce_N_elegant}, we set $\varphi = u^{\varepsilon}$ in \eqref{limita_rce_N_elegant}. Then, proceeding step by step as at the Galerkin level, we obtain
\begin{equation}\label{dec:17}
  \sup_{t\in(0,T)} \Abs{u^{\varepsilon}(t,\cdot)}_{L^2(\Omega)}^2+ \int_0^T\int_{\Omega}\frac{\abs{\vect{q}^{\varepsilon}}^2}{\left(1+\abs{\vect{q}^{\varepsilon}}^a\right)^{\frac 1a}} +\varepsilon \left|\vect{q}^{\varepsilon}\right|^2 \dd x\dd t\leq \mathcal{C}\!\left(\Abs{ u_0}_2,\Abs{ g}_{L^2(Q)}\right).
\end{equation}
It is easy to conclude from the boundedness of the second term, by applying H\"{o}lder's inequality, that
\begin{equation}\label{dec:17a}
 \int_0^T\int_{\Omega}\abs{\vect{q}^{\varepsilon}} \dd x\dd t\leq \mathcal{C}\!\left(|\Omega|, \Abs{ u_0}_2,\Abs{ g}_{L^2(Q)}\right).
\end{equation}
Further estimates are obtained by taking the limit $N\to \infty$ in the estimates obtained at the Galerkin level.

We define $\bq(0,\cdot)$ through the equation
\begin{equation}\label{dec:15b}
\nabla u_0 = \vect{f}^{\varepsilon}(\bq(0,\cdot)) = \frac{\bq(0,\cdot)}{(1+ |\bq(0,\cdot)|^a)^{1/a}} + \varepsilon \bq(0,\cdot).
\end{equation}
As $\nabla \mathcal{P}^N(u_0) = \vect{f}^{\varepsilon}(\bq^N(0,\cdot))$, see \eqref{dec:16}, $\nabla \mathcal{P}^N(u_0) \to \nabla u_0$ strongly in $L^2(\Omega;\mathbb{R}^d)$, and $(\vect{f}^{\varepsilon})^{-1}$ is Lipschitz continuous, we conclude that
$$
  \vect{q}^N(0,\cdot) \xrightarrow{N\to\infty}\bq(0, \cdot) \quad \text{~strongly in~}L^2(\Omega;\mathbb{R}^{d}).
$$
Consequently, we can take the limit $N\to \infty$ in \eqref{fitme-f} and conclude, using also the weak lower semicontinuity of the $L^2$-norm together with \eqref{casova_slabe_N}, that
\begin{equation}\label{dec:15}
\begin{split}
&\int_0^T \int_{\Omega} \abs{\partial_t u^{\varepsilon}}^2 \dd x \dd t \le
\Abs{g}^2_{L^2(Q)} +\varepsilon \Abs{ \bq (0,\cdot)}_{L^2(\Omega;\mathbb{R}^d)}^2+2\Abs{\bq(0,\cdot)}_{L^1(\Omega;\mathbb{R}^d)}.
\end{split}
\end{equation}
It follows from  \eqref{flux_small_data} and \eqref{dec:15b} that
$$
U\ge |\nabla u_0| = \left(\frac{1}{(1+|\bq(0,\cdot)|^a)^{\frac{1}{a}}}+ \varepsilon\right) |\bq(0,\cdot)|\ge \frac{|\bq(0,\cdot)|}{(1+|\bq(0,\cdot)|^a)^{\frac{1}{a}}} \quad \textrm{ a.e. in } Q.
$$
This implies that
\begin{equation*}
|\bq(0,\cdot)|\le \frac{U}{(1-U^a)^{\frac{1}{a}}}.
\end{equation*}
As $U<1$ (see \eqref{flux_small_data}), we get
\begin{equation}
 \|\bq(0,\cdot)\|_{L^1(\Omega; \mathbb{R}^d)} \le \mathcal{C}(a, U) \quad \textrm{ and } \quad \|\bq(0,\cdot)\|_{L^2(\Omega; \mathbb{R}^d)} \le \mathcal{C}(a, U).\label{priprava2}
\end{equation}
The bound $\mathcal{C}(a,U) \to \infty$ as $a\to 0+$ or as $U\to 1-$. Inserting \eqref{priprava2} into \eqref{dec:15}, we get
\begin{equation}\label{dec:14}
\int_0^T \int_{\Omega} \abs{\partial_t u^{\varepsilon}}^2 \dd x \dd t \le \mathcal{C}(a,U,\Abs{g}_{L^2(Q)}).
\end{equation}

Finally, we let $N\to \infty$ in \eqref{odhady_gradient}. Recalling \eqref{q_slabe_N} and also \eqref{dec:18} together with \eqref{constit_N_done}, we have
\begin{align*}
\begin{aligned} 
     \nabla \bq^N &\rightharpoonup \nabla \bq \qquad &&\textrm{weakly in } L^2(Q; \mathbb{R}^{d\times d}),\\
     \bq^N &\to \bq \qquad &&\textrm{a.e. in } Q.
\end{aligned}
\end{align*}
This implies (see the next subsection for the proof in a slightly more general setting) that
$$
   \int_0^T \int_{\Omega}\oldnorm{\nabla \bq}^2_{\mathbb{A}(\vect{q})} \dd x \dd t \le \liminf_{N\to \infty} \int_0^T \int_{\Omega}\oldnorm{\nabla \bq^N}^2_{\mathbb{A}(\vect{q}^N)} \dd x \dd t.
$$
Consequently, letting $N\to \infty$ in \eqref{odhady_gradient}, we get
\begin{align}\label{dec:13b}
\begin{split}
\sup_{t\in(0,T)}\Abs{ \nabla u^{\varepsilon}(t,\cdot)}_{L^2(\Omega;\mathbb{R}^d)}^2+ \int_0^T \int_{\Omega}\oldnorm{\nabla \bq^{\varepsilon}}^2_{\mathbb{A}(\vect{q}^{\varepsilon})} +\varepsilon \abs{\nabla \bq^{\varepsilon}}^2+\abs{\nabla^2 u^{\varepsilon}}^2\dd x \dd t\\
\leq \mathcal{C}\!\left(\Abs{g}_{L^2(Q)}\!,\Abs{u_0}_{W^{1,2}_{per}(\Omega)}\right).
\end{split}
\end{align}

\subsection{Weak lower semicontinuity of the weighted \texorpdfstring{$L^2$}{L2}-norm}\label{dec:63} Here, we shall prove the following statement: if
\begin{align}
\label{dec:12a}
      \bz^n &\rightharpoonup \bz \qquad \textrm{weakly in } L^2(Q; \mathbb{R}^{d}) &\textrm{as } n\to \infty,\\
\label{dec:12b}
\bq^n &\to \bq \qquad \textrm{a.e. in } Q &\textrm{as } n\to\infty,
\end{align}
then
\begin{equation}
  \label{dec:11} \int_Q \oldnorm{\bz}^2_{\mathbb{A}(\vect{q})} \dd x \dd t \le \liminf_{n\to\infty} \int_Q \oldnorm{\bz^n}^2_{\mathbb{A}(\vect{q}^n)} \dd x \dd t.
\end{equation}
To prove it, we first recall that $\oldnorm{\bz}^2_{\mathbb{A}(\vect{q})} = \bz \cdot \mathbb{A}(\bq)\bz$, where $\mathbb{A}$ is introduced in \eqref{dec:3}. Observing that
$$
0 \le \oldnorm{\bz^n - \bz}^2_{\mathbb{A}(\vect{q}^n)} = \oldnorm{\bz^n}^2_{\mathbb{A}(\vect{q}^n)} - \oldnorm{\bz}^2_{\mathbb{A}(\vect{q}^n)} - 2 (\bz, \bz^n - \bz)_{\mathbb{A}(\vect{q}^n)},
$$
we get
\begin{equation}
  \label{dec:10} \int_Q \oldnorm{\bz^n}^2_{\mathbb{A}(\vect{q}^n)} \dd x \dd t \ge \int_Q \oldnorm{\bz}^2_{\mathbb{A}(\vect{q}^n)} \dd x \dd t + 2 \int_Q (\bz, \bz^n - \bz)_{\mathbb{A}(\vect{q}^n)} \dd x \dd t.
\end{equation}
Since $|\mathbb{A}(\bq^n)|\le C(d)$ and \eqref{dec:12b} holds, Lebesgue's dominated convergence theorem implies that
\begin{equation}\label{dec:21}
   \lim_{n\to\infty} \int_Q \oldnorm{\bz}^2_{\mathbb{A}(\vect{q}^n)} \dd x \dd t = \lim_{n\to\infty} \int_Q \bz\cdot \mathbb{A}(\vect{q}^n) \bz \dd x \dd t = \int_Q \oldnorm{\bz}^2_{\mathbb{A}(\vect{q})} \dd x \dd t.
\end{equation}
Furthermore, noticing that
\begin{equation}\label{dec:222}
   \begin{split}
   \int_Q (\bz, \bz^n - \bz)_{\mathbb{A}(\vect{q}^n)} \dd x \dd t  &=  \int_Q \bz \cdot (\mathbb{A}(\bq^n) - \mathbb{A}(\bq)) (\bz^n - \bz) \dd x \dd t  +
   \int_Q \bz \cdot \mathbb{A}(\bq) (\bz^n - \bz) \dd x \dd t \\ &=: I^n_1 + I^n_2,
   \end{split}
\end{equation}
we see that, as $n\to\infty$,  $I^n_2$ vanishes by virtue of \eqref{dec:12a}. To conclude that $I^n_1$ vanishes as well, we first apply H\"{o}lder's inequality to get that
$$
  |I^n_1| \le \|\bz^n - \bz\|_{L^2(Q;\mathbb{R}^d)} \left(\int_Q |\bz|^2 |\mathbb{A}(\bq^n) - \mathbb{A}(\bq)|^2 \dd x \dd t \right)^{1/2},
$$
and then we notice that $\|\bz^n - \bz\|_{L^2(Q;\mathbb{R}^d)}$ is bounded due to \eqref{dec:12a} and the last integral vanishes again by Lebesgue's dominated convergence theorem. Thus, $\lim_{n\to \infty} (I^n_1 + I^n_2) = 0 $ and the assertion \eqref{dec:11} follows from \eqref{dec:10}-\eqref{dec:222}.
\section{Limit \texorpdfstring{$\varepsilon\to 0_+$}{etozero}}\label{sec:5}

\subsection{The attainment of \texorpdfstring{$\nabla u = \vect{f}(\bq)$}{n} a.e. in \texorpdfstring{$Q$}{Q}} In Sect. \ref{galerkin_section}, assuming that $u_0\in W^{1,\infty}_{per}(\Omega)$ satisfies \eqref{flux_small_data} and $g\in L^2(Q)$, we established, for any $a>0$ and $\varepsilon\in (0,1)$, the existence of unique weak solution to \eqref{zadani_eps} satisfying \eqref{wf-eps}. Furthermore, particularly in Subsect. \ref{apriori_revisited}, we showed that $\{(u^{\varepsilon}, \bq^{\varepsilon})\}_{\varepsilon\in (0,1)}$ satisfies the estimates \eqref{dec:17}, \eqref{dec:17a}, \eqref{dec:14} and \eqref{dec:13b}. As a consequence of these estimates (that are uniform w.r.t. $\varepsilon$) and the Aubin-Lions compactness lemma, one can find $\varepsilon_m\to 0$ and the corresponding sequence $(u^m,\bq^m) := (u^{\varepsilon_m}, \bq^{\varepsilon_m})$ such that
\begin{subequations}\label{hr:1}
\begin{align}
    \label{hr:2}
    u^m&\rightharpoonup u &&\text{~weakly in~} L^2\left(0,T; W^{2,2}_{per}(\Omega)\right),\\ \label{hr:3}
    \partial_t{u}^m&\rightharpoonup \, \partial_t {u} &&\text{~weakly in~} L^{2}\left(0,T;L^{2}(\Omega)\right), \\ \label{hr:4}
    \nabla u^m&\to \nabla u &&\text{~strongly in~} L^2\left(0,T; L^2_{per}(\Omega; \mathbb{R}^d)\right),\\
    \label{hr:5}
    \nabla u^m&\to \nabla u &&\text{~a.e. in~} Q,
\end{align}
\end{subequations}
and also, using \eqref{hr:5} and Egoroff's theorem on one side and \eqref{dec:17a} and Chacon's biting lemma (see \cite{BaMu89}) on the other side, there is a ${\bq\in L^1(Q; \R^d)}$ such that for each $\delta>0$ there exists a $\tilde{Q}_{\delta}\subset Q$ fulfilling $\tilde{Q}_{\delta_2}\subset \tilde{Q}_{\delta_1}$ if $\delta_1\le \delta_2$ as well as  $|Q\setminus \tilde{Q}_{\delta}|\le \delta$ such that
\begin{equation}
\begin{aligned}\label{biting}
 \bq^m&\rightharpoonup \bq  &&\text{weakly~in~}L^{1}(\tilde{Q}_{\delta}; \R^d),\\
 \nabla u^m &\to \nabla u  &&\text{strongly~in~}L^{\infty}(\tilde{Q}_{\delta}; \R^d).
\end{aligned}
\end{equation}
Further, we denote
$$
Q_{\delta}:=\tilde{Q}_{\delta}\cap \left\{(t,x)\in Q; \, |\bq(t,x)|\le \frac{1}{\delta}\right\}
$$
and it follows from \eqref{biting} that
$$
|Q\setminus {Q}_{\delta}|\le |Q\setminus \tilde{Q}_{\delta}|+|\left\{(t,x)\in Q; \, |\bq(t,x)|> \delta^{-1}\right\}|\le \delta\left(1+\int_Q |\bq| \dd x \dd t \right)\le C\delta.
$$
Hence, using the (strict) monotonicity of $\vect{f}$, see Lemma \ref{monotonie}, the facts that $\vect{f}(\bq)\in L^{\infty}(Q;\mathbb{R}^d)$ and $\vect{f}(\bq^m) = \nabla u^m - \varepsilon_m \bq^m$, see \eqref{wf-ce}, the convergence properties \eqref{biting}, the obvious relation $Q_{\delta}\subset \tilde{Q}_{\delta}$,  and the fact that $\bq$ is bounded (depending on $\delta$) on $Q_{\delta}$, we observe that
$$
\begin{aligned}
0&\le \limsup_{m\to \infty}\int_{Q_{\delta}} \bigl(\vect{f}\left(\bq^m\bigr)-\vect{f}\left(\bq\right)\right) \cdot (\bq^m-\bq)\dd x \dd t =\limsup_{m\to \infty}\int_{Q_{\delta}} \vect{f}(\bq^m) \cdot (\bq^m-\bq)\dd x \dd t\\
&=\limsup_{m\to \infty}\int_{Q_{\delta}} \nabla u^m \cdot (\bq^m-\bq) - \varepsilon_m \bq^m\cdot (\bq^m-\bq)  \dd x \dd t \\
&=\limsup_{m\to \infty}\int_{Q_{\delta}}  (\nabla u^m- \nabla u) \cdot (\bq^m - \bq) + \nabla u \cdot (\bq^m-\bq) - \varepsilon_m |\bq^m|^2 + {\varepsilon_m} \bq^m \cdot \bq  \dd x \dd t\\
&\le 0.
\end{aligned}
$$
This implies that there is a subsequence (that we again denote by $\bq^m$) such that
$$
  \lim_{m\to\infty}  \bigl(\vect{f}\left(\bq^m\bigr)-\vect{f}\left(\bq\right)\right) \cdot (\bq^m-\bq) = 0 \quad \textrm{a.e. in } Q_{\delta}.
$$
As $\vect{f}$ is strictly monotone, we conclude (referring for example to Lemma 6 in \cite{DalMasoMurat1998}) that
$$
\bq^m \to \bq \quad \textrm{a.e. in } Q_\delta .
$$
However, as $\delta>0$ is arbitrary and $|Q\setminus Q_\delta|\le C\delta$, this yields
\begin{equation}\label{dec:22}
    \bq^m \to \bq \quad \textrm{a.e. in } Q.
\end{equation}
As $\vect{f}$ is continuous, letting $m\to \infty$ in $\vect{f}(\bq^m) = \nabla u^m - \varepsilon_m \bq^m$ (valid a.e. in $Q$) and using \eqref{hr:5} and \eqref{dec:22}, we conclude that \eqref{constit_old} holds. \\

\subsection{Limit in the governing evolutionary equation}~It remains to show that \eqref{rovnice} holds. Towards this goal,
we ``renormalize" the equation \eqref{wf-rovnice} for $\varepsilon_m$-approximation with the help of smooth, compactly supported approximations of unity denoted by $\tau_k$, which are the functions of $|\bq^m|$. The required equation \eqref{rovnice} is then obtained by a careful study of the limiting process as $m\to\infty$ and $k\to\infty$.

It follows from \eqref{wf-rovnice} that, for all $m\in \mathbb{N}$,
\begin{align}
\label{flux_rce}
    \int_Q \partial_t u^m\,\varphi+ \vect{q}^m\cdot \nabla\varphi\dd x\dd t&=\int_Q g\, \varphi\dd x \dd t &&\text{~for all~}\varphi\in L^2\left(0,T;W^{1,2}_{per}(\Omega)\right).
\end{align}
In order to make use of these relations in the absence of weak convergence of $\vect{q}^m$ in $L^1(Q)$, we consider
$$
\psi\in
L^{\infty}\left(0,T;W_{per}^{1,\infty}(\Omega)\right),
$$
and set as a test function $\varphi$ in \eqref{flux_rce}
\begin{equation}
    \varphi:=\tau_k(\abs{\vect{q}^m}) \psi, \label{dec:23}
\end{equation}
where $\tau_k$, $k\in \mathbb{N}$, ``approximates unity'', i.e. $\tau_k\in C_0^{\infty}\left([0,\infty)\right)$ satisfies for all  $k\in\N$
the following conditions: $0\le \tau_k(s)\le 1$ for all $s\in[0,\infty)$, $\tau_k(s)= 1$ on $[0,k]$, $\tau_k(s)=0$ on $[ k+1, \infty)$ and $-2\leq \tau_k'(s)\leq 0$ for all $s\in (k,k+1)$. Note that, for fixed $m$, the test function specified in \eqref{dec:23} is an admissible test function due to \eqref{dec:13b}.

Inserting \eqref{dec:23} into \eqref{flux_rce}, we obtain
\begin{align}
\label{flux_rce_a}
    \int_Q \partial_t u^m\,\tau_k(\abs{\vect{q}^m}) \psi \dd x\dd t + \int_Q \vect{q}^m \tau_k(\abs{\vect{q}^m}) \cdot \nabla \psi \dd x\dd t&=\int_Q g\, \tau_k(\abs{\vect{q}^m}) \psi \dd x \dd t - \int_Q \vect{q}^m \cdot \nabla \tau_k(\abs{\vect{q}^m}) \psi \dd x\dd t.
\end{align}
Letting $m\to\infty$ and $k\to \infty$ in \eqref{flux_rce_a}, our aim is to show that we can remove $m$ and replace $\tau_k$ by $1$ in the first three integrals, while the last integral vanishes. Each term requires a special treatment.

To treat the term involving the time derivative, we first observe that
\begin{equation}
\begin{split}\label{dec:24}
  I_1^{m,k}:&= \int_Q \left( \partial_tu^m\,\tau_k (|\bq^m|) \psi - \partial_t u \psi \right) \dd x\dd t  = \int_Q \partial_tu^m \left( \tau_k(|\bq^m|) - \tau_k(|\bq|) \right) \psi \dd x\dd t \\
  &+ \int_Q \left( \partial_tu^m - \partial_t u\right) \tau_k(|\bq|)  \psi  \dd x\dd t - \int_Q \partial_t u\, (1 - \tau_k(|\bq|)) \psi \dd x\dd t =: J_1^{m,k} + J_2^{m,k} - J_3^{m,k}.
\end{split}
\end{equation}
By H\"{o}lder's inequality, \eqref{dec:14}, \eqref{dec:22} and Lebesgue's dominated convergence theorem, we observe that
$$
  |J_1^{m,k}| \le \|\partial_t u^m\|_{L^2(Q)} \|\psi\|_{L^{\infty}(Q)} \left( \int_Q \left| \tau_k(|\bq^m|) - \tau_k(|\bq|) \right|^2 \dd x\dd t \right)^{1/2} \to 0 \quad \textrm{ as } m\to\infty.
$$
By \eqref{hr:3}, $J_2^{m,k} \to 0$ as $m\to \infty$. Using Levi's monotone convergence theorem we also get
$$
  |J_3^{m,k}| \le \int_Q |\partial_t u|\,\, |\psi|\left(1 - \tau_k(|\bq|) \right) \dd x\dd t \to 0 \quad \textrm{ as } k\to\infty.
$$
Consequently, it follows from \eqref{dec:24} and the above arguments that
\begin{equation*}
    \lim_{k\to\infty} \lim_{m\to \infty} \int_Q \partial_t u^m\,\tau_k(\abs{\vect{q}^m}) \psi \dd x\dd t = \int_Q \partial_t u \, \psi \dd x\dd t.
\end{equation*}
Even simpler arguments give
\begin{equation}\label{dec:26}
    \lim_{k\to\infty} \lim_{m\to \infty} \int_Q g\,\tau_k(\abs{\vect{q}^m}) \psi \dd x\dd t = \int_Q g \, \psi \dd x\dd t.
\end{equation}
Since, by \eqref{dec:22} and Lebesgue's dominated convergence theorem,
$$
\left| \int_Q \left(\bq^m \tau_k(|\bq^m|) - \bq \tau_k(|\bq|)\right) \cdot \nabla \psi \dd x\dd t \right| \le \|\nabla\psi\|_{L^{\infty}(Q)} \int_Q |\bq^m \tau_k(|\bq^m|) - \bq \tau_k(|\bq|)| \dd x\dd t \to 0 \quad \textrm{as } m\to \infty,
$$
we also observe (again using Levi's monotone convergence theorem) that
\begin{equation*}
    \lim_{k\to\infty} \lim_{m\to \infty} \int_Q \bq^m \tau_k(|\bq^m|) \cdot\nabla \psi \dd x\dd t = \int_Q \bq \cdot \nabla \psi \dd x\dd t.
\end{equation*}
It remains to show that the last term in \eqref{flux_rce_a} tends to zero as $m,k \to \infty$. To prove this, we will incorporate the weighted $L^2$-estimates for $\nabla \bq^m$, see \eqref{dec:13b}. Before starting to treat this term, we introduce, for every $k\in\N$, an auxiliary function $G_k$ through
\begin{equation*}
    G_k(t):=\int_0^t\tau'_k(s)(1+s^a)^{\frac 1a}\dd s\qquad\text{~for ~}t\in[0,\infty)
\end{equation*}
and observe that $G_k(t) =0$ on $[0,k]$ and
\begin{equation}\label{G_odhad}
    \abs{G_k(t)}\leq\int_k^{k+1}\abs{\tau'_k(s)} 2^{\frac1a} s \dd s\leq 2^{\frac{a+1}{a}} (1+k) \leq \mathcal{C}(a) t \quad \textrm{ for every } t\geq k.
\end{equation}
Let us now rewrite the last term in \eqref{flux_rce_a} in the following manner:
\begin{align}\label{flux_hard_1}
\begin{split}
  \int_Q\psi\vect{q}^m\cdot\nabla\tau_k(\abs{\vect{q}^m})\dd x\dd t&=\int_Q\psi\frac{\vect{q}^m\cdot\nabla(\abs{\vect{q}^m})}{(1+\abs{\vect{q}^m}^a)^{\frac 1a}}\tau_k'(\abs{\bq^m})\left(1+\abs{\vect{q}^m}^a\right)^{\frac 1a}\dd x\dd t=\int_Q\psi \, \vect{f}(\bq^m) \cdot\nabla G_k(\abs{\vect{q}^m}) \dd x\dd t\\
    &=-\int_Q \nabla\psi\cdot\vect{f}(\vect{q}^m) G_k(\abs{\vect{q}^m}) \dd x\dd t - \int_Q\psi G_k(\abs{\vect{q}^m}) \di \vect{f}(\bq^m) \, \dd x\dd t =: J_4^{m,k} + J_5^{m,k}.
    \end{split}
\end{align}
To show that $J_4^{m,k}$ vanishes as $m\to \infty$ and $k\to \infty$, we first employ, for any fixed $k$, \eqref{dec:22} and Lebesgue's dominated convergence theorem (noticing that
$\abs{\nabla\psi\cdot\vect{f}(\vect{q}^m)} G_k(\abs{\vect{q}^m}) \leq 2^{\frac{a+1}{a}} (1+k)\Vert\nabla\psi\Vert_{L^{\infty}(Q;\R^d)}$)
and obtain
$$
\lim_{m\to \infty}\int_Q\nabla\psi\cdot\vect{f}(\vect{q}^m) G_k(\abs{\vect{q}^m})\dd x\dd t \to \int_Q \nabla\psi\cdot \vect{f}(\vect{q}) G_k(\abs{\vect{q}})\dd x\dd t.
$$
Since $G_k(t)= 0$ on $[0,k]$, we conclude from the estimate \eqref{G_odhad} and the fact that $\vect{q}\in L^1(Q;\R^d)$ that
\begin{align*}
    \abs{\int_{Q} \nabla\psi\cdot\vect{f}(\vect{q}) G_k(\abs{\vect{q}})\dd x\dd t}&\leq \int_{Q\cap\{\abs{\vect{q}}>k\}}\abs{ \nabla\psi\cdot\vect{f}(\vect{q}) G_k(\abs{\vect{q}})} \dd x\dd t\\&\leq \mathcal{C}(a)\Vert\nabla\psi\Vert_{L^{\infty}(Q;\R^d)}\int_{Q\cap\{\abs{\vect{q}}>k\}}\abs{\vect{q}}\dd x\dd t   \xrightarrow{k\to\infty}0.
\end{align*}
Hence, $\lim_{k\to\infty}\lim_{m\to \infty} J_4^{m,k} = 0$.

In order to show that also the term $J_5^{m,k}$ vanishes as $m\to\infty$ and $k\to \infty$ we need to proceed more carefully. First, recalling \eqref{dec:3}, we notice that
\begin{equation*}
    \di\left(\vect{f}(\bq^m)\right) = (\mathbb{A} (\bq^m))_{ij} \partial_i q^m_j = (\mathbb{A} (\bq^m))_{ij} \partial_s q^m_j \delta_{is} = \sum_{s=1}^d \left(\partial_s \bq^m, \vect{e}_s\right)_{\mathbb{A}(\bq^m)}\qquad\text{~a.e. in~}Q,
\end{equation*}
where $\vect{e}_s\in\R^d$ is the $s$th vector of the canonical basis in $\mathbb{R}^d$, $s=1, \dots, d$.
This allows us to rewrite and estimate $J^{m,k}_5$ introduced in \eqref{flux_hard_1} as follows:
\begin{align*}
\left|J^{m,k}_5\right| &= \left| \int_Q \sum_{s=1}^d \left( \partial_s \bq^m, \psi G_k(|\bq^m|)\vect{e}_s\right)_{\mathbb{A}(\bq^m)} \right| \dd x\dd t \le \|\psi\|_{L^{\infty}(Q)} \sum_{s=1}^d \int_Q  \oldnorm{\partial_s \bq^m}_{\mathbb{A}(\vect{q}^m)} \oldnorm{G_k(|\bq^m|)\vect{e}_s}_{\mathbb{A}(\vect{q}^m)} \dd x\dd t \\
&\le \|\psi\|_{L^{\infty}(Q)} \sum_{s=1}^d \left(\int_Q  \oldnorm{\nabla \bq^m}^2_{\mathbb{A}(\vect{q}^m)} \dd x\dd t\right)^{\frac12} \left(\int_Q  \oldnorm{G_k(|\bq^m|)\vect{e}_s}^2_{\mathbb{A}(\vect{q}^m)} \dd x\dd t \right)^{\frac12} \\
&\!\!\overset{\eqref{dec:13b}}{\le} \mathcal{C}\!\left(\Abs{g}_{L^2(Q)}\!,\Abs{u_0}_{W^{1,2}_{per}(\Omega)}\right) \|\psi\|_{L^{\infty}(Q)} \sum_{s=1}^d  \left(\int_Q  \oldnorm{G_k(|\bq^m|)\vect{e}_s}^2_{\mathbb{A}(\vect{q}^m)} \dd x\dd t \right)^{\frac12} \\
& \!\!\overset{\eqref{odhady_s_carkou_new}}{\le} \mathcal{C}\!\left(d,\Abs{g}_{L^2(Q)}\!,\Abs{u_0}_{W^{1,2}_{per}(\Omega)}\right) \|\psi\|_{L^{\infty}(Q)} \left(\int_Q  \frac{|G_k(|\bq^m|)|^2}{(1+|\vect{q}^m|^a)^{1/a}} \dd x\dd t \right)^{\frac12}.
\end{align*}
Letting $m\to\infty$ in the last term, using \eqref{dec:22}, \eqref{G_odhad} and Lebesgue's dominated convergence theorem, we get
\begin{equation}\label{dec:33}
    \limsup_{m\to\infty} \, \left|J^{m,k}_5\right| \le \mathcal{C}\!\left(d,\Abs{g}_{L^2(Q)}\!,\Abs{u_0}_{W^{1,2}_{per}(\Omega)}\right) \|\psi\|_{L^{\infty}(Q)} \left(\int_Q  \frac{|G_k(|\bq|)|^2}{(1+|\vect{q}|^a)^{1/a}} \dd x\dd t \right)^{\frac12}.
\end{equation}
However, as $G_k(s)= 0$ on $[0,k]$ and \eqref{G_odhad} holds, we further observe that
\begin{align*}
    \int_{Q} \frac{G_k^2(\abs{\vect{q}})}{(1+\abs{\vect{q}}^a)^{1/a}}\dd x \dd t &=\int_{Q\cap\{\abs{\vect{q}}>k\}} \frac{G_k^2(\abs{\vect{q}})}{(1+\abs{\vect{q}}^a)^{1/a}} \dd x \dd t \\
    &\le \mathcal{C}(a) \int_{Q\cap\{\abs{\vect{q}}>k\}} \frac{\abs{\vect{q}}^2}{1+\abs{\vect{q}}} \dd x \dd t
    \le \mathcal{C}(a) \int_{Q\cap\{\abs{\vect{q}}>k\}} \abs{\vect{q}} \dd x \dd t \to 0 \quad \textrm{ as } k\to \infty.
\end{align*}
Hence $\lim_{k\to\infty}\lim_{m\to\infty} \left|J_5^{m,k}\right| = 0$ and, taking all computations starting at \eqref{flux_hard_1} into consideration, the last term in \eqref{flux_rce_a} vanishes as $m\to\infty$ and $k\to\infty$. The proof of the first part of Theorem \ref{main_thm} is complete.

\section{Improved time derivative estimates and higher integrability of the flux for \texorpdfstring{$a\in (0, 2/(d+1))$}{a}}

In order to prove the second part of Theorem \ref{main_thm}, we will combine the uniform spatial derivative estimates established in \eqref{dec:13b} for $(u^\varepsilon,\bq^\varepsilon)$ together with the uniform time derivative estimates that we are going to prove next.

\subsection{Improved time derivative estimates}\label{time_derivative}~
Consider, for any $\varepsilon\in(0,1)$, the unique weak solution $(u^\varepsilon,\bq^\varepsilon)$ to \eqref{zadani_eps} satisfying \eqref{dec:34} and \eqref{wf-eps}. It follows from \eqref{wf-rovnice}, \eqref{wf-in} and the assumption $u_0\in W^{1,2}_{per}(\Omega)$ that, for $\tau\in(0,T]$,
\begin{equation}\label{dec:54}
    \int_{\Omega} (u^\varepsilon(\tau,\cdot) - u_0) u_0 \dd x  + \int_0^{\tau} \int_{\Omega} \bq^{\varepsilon}\cdot\nabla u_0 \dd x \dd s = \int_0^{\tau} \int_{\Omega} g\, u_0 \dd x \dd s.
\end{equation}
By setting $\varphi = u^\varepsilon$ in \eqref{wf-rovnice}, followed by integration over time between $0$ and $\tau$, we also have
\begin{equation}\label{dec:50}
    \|u^\varepsilon(\tau,\cdot)\|_{L^2(\Omega)}^2 - \|u_0\|_{L^2(\Omega)}^2 + 2 \int_0^{\tau} \int_{\Omega} \bq^{\varepsilon}\cdot\nabla u^\varepsilon \dd x \dd s = 2 \int_0^{\tau} \int_{\Omega} g\, u^\varepsilon \dd x \dd s.
\end{equation}

\noindent\emph{Step 1.} For any $z:[0,T]\times\Omega \to \R$ and for $\tau\in\R$ such that $t+\tau\in[0,T]$, we set
$$
  \delta_{\tau} z(t,x) := \frac{z(t+\tau,x) - z(t,x)}{\tau}.
$$
Taking the weak formulation \eqref{wf-rovnice} at $t+\tau$, followed by subtracting \eqref{wf-rovnice} at $t$, and taking then $\varphi = \frac{1}{\tau} \delta_{\tau}u^\varepsilon$ as a test function in the resulting equation, we obtain
\begin{equation}\label{dec:51}
  \frac12 \ddt \|\delta_{\tau} u^\varepsilon\|_{L^2(\Omega)}^2 + \int_{\Omega} \delta_{\tau}\bq^{\varepsilon} \cdot \nabla \delta_{\tau} u^\varepsilon \dd x = \int_{\Omega} \delta_{\tau} g \, \delta_{\tau} u^\varepsilon \dd x.
\end{equation}
Using \eqref{wf-ce} (or \eqref{constit_eps}) and \eqref{dec:3}, we observe that
\begin{align}\label{dec:52}
    \delta_{\tau}\bq^{\varepsilon} \cdot \nabla \delta_{\tau} u^\varepsilon =\delta_{\tau}\bq^{\varepsilon} \cdot \delta_{\tau} \nabla u^\varepsilon = \int_0^1 \delta_{\tau}\bq^{\varepsilon} \cdot \mathbb{A}({\bq}^{\varepsilon}_{\theta,\tau}) \delta_{\tau}\bq^{\varepsilon} \,\dd \theta + \varepsilon |\delta_{\tau}\bq^{\varepsilon}|^2 =\int_0^1 \oldnorm{\delta_{\tau}\bq^{\varepsilon}}^2_{\mathbb{A}({\bq}^{\varepsilon}_{{\theta,\tau}})} \, \dd \theta + \varepsilon |\delta_{\tau}\bq^{\varepsilon}|^2,
\end{align}
where
$$
{\bq}^{\varepsilon}_{{\theta,\tau}}(t,x):= {\bq}^{\varepsilon}(t,x) + \theta \left( {\bq}^{\varepsilon}(t+\tau,x) - {\bq}^{\varepsilon}(t,x)\right) \quad\textrm{for } \theta\in (0,1).
$$
Inserting \eqref{dec:52} into \eqref{dec:51} and using the Cauchy-Schwarz inequality to estimate the right-hand side and the Gronwall lemma, we conclude that for a.a. $t\in (0,T]$ the following estimates holds:
\begin{equation}\label{dec:53}
  \|\delta_{\tau} u^\varepsilon(t,\cdot)\|_{L^2(\Omega)}^2 + \int_0^t \int_{\Omega} \int_0^1 \oldnorm{\delta_{\tau}\bq^{\varepsilon}}^2_{\mathbb{A}({\bq}^{\varepsilon}_{\theta,\tau})} \, \dd \theta + \varepsilon |\delta_{\tau}\bq^{\varepsilon}|^2 \dd x \dd s \le C \, e^T \left(\|\delta_{\tau} u^\varepsilon(0,\cdot)\|_{L^2(\Omega)}^2 + \int_0^T \|\delta_{\tau} g \|_{L^2(\Omega)}^2 \dd s\right).
\end{equation}
This would lead to the required $\varepsilon$-independent estimates provided that we can control $\|\delta_{\tau} u^\varepsilon(0,\cdot)\|_{L^2(\Omega)}^2$ uniformly w.r.t. $\varepsilon$.

\noindent\emph{Step 2.} Towards this aim, we start by noticing that trivially
$$
  \|\delta_{\tau} u^\varepsilon(0,\cdot)\|_{L^2(\Omega)}^2 = \frac{1}{\tau^2} \|u^\varepsilon(\tau,\cdot) - u_0\|_{L^2(\Omega)}^2
$$
and
\begin{equation}\label{dec:55}
\|u^\varepsilon(\tau,\cdot) - u_0\|_{L^2(\Omega)}^2 = \|u^\varepsilon(\tau,\cdot)\|_{L^2(\Omega)}^2 - \|u_0\|_{L^2(\Omega)}^2
-2\int_{\Omega} (u^{\varepsilon}(\tau, \cdot) - u_0)\, u_0 \dd x.
\end{equation}
Inserting \eqref{dec:54} and \eqref{dec:50} into \eqref{dec:55} we get
\begin{equation*}
\begin{split}
\frac12 \|u^\varepsilon(\tau,\cdot) - u_0\|_{L^2(\Omega)}^2 &=
\int_0^\tau\int_{\Omega} g u^\varepsilon \dd x\dd s
-  \int_0^\tau\int_{\Omega} \bq^\varepsilon\cdot\nabla u^\varepsilon \dd x\dd s \\ &- \int_0^\tau \int_\Omega g u_0 \dd x \dd s + \int_0^\tau \int_\Omega \bq^\varepsilon \cdot \nabla u_0 \dd x \dd s.
\end{split}
\end{equation*}
This can be rewritten as
\begin{equation}\label{dec:57}
\begin{split}
\frac12 \|u^\varepsilon(\tau,\cdot)  - u_0\|_{L^2(\Omega)}^2
&+\int_0^\tau\int_{\Omega} (\bq^\varepsilon - \bq^\varepsilon(0, \cdot)) \cdot(\nabla u^\varepsilon - \nabla u_0) \dd x\dd s \\
&= \int_0^\tau\int_{\Omega} (g - g(0,\cdot) (u^\varepsilon - u_0) \dd x\dd s + \int_0^\tau\int_{\Omega} g(0,\cdot) (u^\varepsilon - u_0) \dd x\dd s \\ &+ \int_0^\tau \int_{\Omega} \di \bq^\varepsilon(0,\cdot) \left( u^\varepsilon - u_0\right)  \dd x \dd s,
\end{split}
\end{equation}
where $\bq^\varepsilon(0,\cdot)$ is defined, in accordance with Subsect. \ref{apriori_revisited}, through
\begin{equation}\label{dec:57a}
    \nabla u_0 = \vect{f}^{\varepsilon}(\bq^\varepsilon(0,\cdot)) = \frac{\bq^\varepsilon(0,\cdot)}{(1+|\bq^\varepsilon(0,\cdot)|^a)^{1/a}} + \varepsilon \bq^\varepsilon(0,\cdot).
\end{equation}
Since $\nabla u^\varepsilon = \vect{f}^{\varepsilon}(\bq^\varepsilon)$ and $\vect{f}^{\varepsilon}$ is monotone, the second term at the left-hand side of \eqref{dec:57} is nonnegative. Introducing the notation
$$
  y(\tau):=\frac12 \int_0^\tau \|u^\varepsilon(s,\cdot)  - u_0\|_{L^2(\Omega)}^2 \dd s
$$
and
$$
  A(s):= \|g(s,\cdot) - g(0,\cdot)\|_{L^2(\Omega)}
  + \|g(0,\cdot)\|_{L^2(\Omega)} + \|\nabla \bq^{\varepsilon}(0,\cdot)\|_{L^2(\Omega;\R^{d\times d})},
$$
we conclude from \eqref{dec:57}, using H\" older's inequality, that
\begin{equation}\label{Jan:0}
    y'(\tau) = \frac12 \|u^\varepsilon(\tau,\cdot)  - u_0\|_{L^2(\Omega)}^2 \dd s \le  \int_0^{\tau} A(s) \|u^{\varepsilon}(s,\cdot) - u_0\|_{L^2(\Omega)} \dd s \le \left( \int_0^\tau A^2(s) \, \dd s\right)^{1/2} 2 y(\tau)^{1/2}.
\end{equation}
This (together with relabelling $s$ on $v$ and $\tau$ on $s$) implies that
\begin{equation} \label{Jan:1a}
    (y^{1/2})'(s) \le \left( \int_0^s A^2(v) \, \dd v\right)^{1/2}.
\end{equation}
Since $y(0)=0$, integrating \eqref{Jan:1a} over $(0,\tau)$ and using then H\"{o}lder's inequality, we get
\begin{equation*}
    y(\tau) \le \left( \int_0^\tau \left( \int_0^s A^2(v) \, \dd v\right)^{1/2} \dd s\right)^{2} \le \tau \int_0^\tau \int_0^s A^2(v) \, \dd v \dd s \le \tau^2 \int_0^\tau A^2(s) \, \dd s,
\end{equation*}
which implies that
\begin{equation*}
    (y(\tau))^{1/2} \le \tau \left( \int_0^\tau A^2(s) \dd s \right)^{1/2}.
\end{equation*}
Using this to estimate the last term in \eqref{Jan:0}, it follows from \eqref{Jan:0} that
\begin{equation}\label{Jan:1}
    \|u^\varepsilon(\tau,\cdot)  - u_0\|_{L^2(\Omega)}^2 \dd s \le 4\tau \int_0^\tau A^2(s) \, \dd s.
\end{equation}
Recalling the definition of $A$, \eqref{Jan:1} leads to  (using also $1/\tau^2 \le 1/s^2$)
\begin{equation*}
    \begin{split}
    \|u^\varepsilon(\tau,\cdot)  - u_0\|_{L^2(\Omega)}^2 &\le C \tau \int_0^\tau \|g(s,\cdot) - g(0,\cdot)\|^2_{L^2(\Omega)}
  + \|g(0,\cdot)\|^2_{L^2(\Omega)} + \|\nabla \bq^{\varepsilon}(0,\cdot)\|^2_{L^2(\Omega;\R^{d\times d})} \, \dd s \\
    &\le C \tau^2 \left (\|\nabla \bq^{\varepsilon}(0,\cdot)\|^2_{L^2(\Omega;\R^{d\times d})} + \|g(0,\cdot)\|^2_{L^2(\Omega)} + \tau \int_0^\tau \|\delta_{s} g(0,\cdot)\|_{L^2(\Omega)}^2 \dd s \right) .
    \end{split}
\end{equation*}
This finally gives
\begin{equation*}
\|\delta_{\tau} u^\varepsilon(0,\cdot)\|_{L^2(\Omega)}^2 \le
C\left( \|\nabla \bq^{\varepsilon}(0,\cdot)\|^2_{L^2(\Omega;\R^{d\times d})} + \|g(0,\cdot)\|^2_{L^2(\Omega)} + \tau \int_0^\tau \|\delta_{s} g(0,\cdot)\|_{L^2(\Omega)}^2 \dd s\right).
\end{equation*}
As $g\in W^{1,2}\left(0,T;L^2(\Omega)\right)$ and $W^{1,2}\left(0,T;L^2(\Omega)\right)\hookrightarrow C([0,T];L^2(\Omega))$, the second and the third terms on the right-hand side are bounded.\footnote{Note that it would be sufficient to assume that $g\in W^{\beta,2}\left(0,T;L^2(\Omega)\right)$ for some $\beta>1/2$.} Hence, we finally get
\begin{equation}\label{dec:58}
\|\delta_{\tau} u^\varepsilon(0,\cdot)\|_{L^2(\Omega)} \le \mathcal{C}\left( \|g\|_{W^{1,2}(0,T;L^2(\Omega))}\right) + C \|\nabla \bq^{\varepsilon}(0,\cdot)\|_{L^2(\Omega;\R^{d\times d})}.
\end{equation}

In order to estimate $\|\nabla \bq^{\varepsilon}(0,\cdot)\|_{L^2(\Omega;\R^{d\times d})}$, we first recall that it follows from \eqref{flux_small_data} and \eqref{dec:57a} that
$$
U\ge |\nabla u_0| = \left(\frac{1}{(1+|\bq^{\varepsilon}(0,\cdot)|^a)^{\frac{1}{a}}}+ \varepsilon\right) |\bq^\varepsilon(0,\cdot)|\ge \frac{|\bq^{\varepsilon}(0,\cdot)|}{(1+|\bq^{\varepsilon}(0,\cdot)|^a)^{\frac{1}{a}}} \quad \textrm{ a.e. in } Q,
$$
which implies that
\begin{equation}\label{dec:59}
   |\bq^{\varepsilon}(0,\cdot)|^a \le \frac{U^a}{1-U^a} \quad \textrm{ and } \quad (1+|\bq^{\varepsilon}(0,\cdot)|^a)^{1+\frac{1}{a}} \le \frac{1}{(1-U^a)^{1+\frac{1}{a}}}.
\end{equation}
Next, applying the partial derivative w.r.t. $x_j$ to \eqref{dec:57a} and using also \eqref{dec:3} we get
$$
  \nabla \partial_j u_0 = \mathbb{A}(\bq^\varepsilon(0,\cdot)) \partial_j\bq^\varepsilon(0,\cdot) + \varepsilon \partial_j \bq^\varepsilon(0,\cdot).
$$
Taking the scalar product of this identity with $\partial_j \bq^\varepsilon(0,\cdot)$ and summing the result over $j$, $j=1,\dots,d$, we arrive at
$$
  \varepsilon |\nabla \bq^\varepsilon(0,\cdot)|^2 + \oldnorm{\nabla \bq^\varepsilon(0,\cdot)}^2_{\mathbb{A}(\bq^\varepsilon(0,\cdot))} =  \nabla^2 u_0 \cdot \nabla \bq^\varepsilon(0,\cdot) \le |\nabla^2 u_0| \, |\nabla \bq^\varepsilon(0,\cdot)|.
$$
By virtue of \eqref{odhady_s_carkou_new}, this leads to
$$
  \frac{|\nabla \bq^\varepsilon(0,\cdot)|^2}{(1+|\bq^{\varepsilon}(0,\cdot)|^a)^{1+\frac{1}{a}}} \le |\nabla^2 u_0| \, |\nabla\bq^\varepsilon(0,\cdot)| \,\,\implies\,\, |\nabla \bq^\varepsilon(0,\cdot)| \le  |\nabla^2 u_0| (1+|\bq^{\varepsilon}(0,\cdot)|^a)^{1+\frac{1}{a}} \overset{\eqref{dec:59}}{\le} |\nabla^2 u_0|\frac{1}{(1-U^a)^{1+\frac{1}{a}}},
$$
which implies that
$$
  \|\nabla \bq^{\varepsilon}(0,\cdot)\|_{L^2(\Omega;\R^{d\times d})} \le
  \frac{1}{(1-U^a)^{1+\frac{1}{a}}} \|\nabla^2 u_0\|_{L^2(\Omega;\R^{d\times d})}.
$$
Consequently, using \eqref{dec:53} and \eqref{dec:58}, we conclude that
\begin{equation}\label{dec:60}
  \|\delta_{\tau} u^\varepsilon(t,\cdot)\|_{L^2(\Omega)}^2 + \int_0^t \int_{\Omega} \int_0^1 \oldnorm{\delta_{\tau}\bq^{\varepsilon}}^2_{\mathbb{A}({\bq}^{\varepsilon}_{\theta,\tau})} \, \dd \theta + \varepsilon |\delta_{\tau}\bq^{\varepsilon}|^2 \dd x \dd s \le \mathcal{C}\left( \|g\|_{W^{1,2}(0,T;L^2(\Omega))}, \|u_0\|_{W^{2,2}(\Omega)}, a, U \right).
  \end{equation}

\noindent\emph{Step 3.} Letting $\tau \to 0$ in \eqref{dec:60} ($\varepsilon\in (0,1)$ being fixed) we claim that
\begin{equation}\label{dec:60a}
  \|\partial_t u^\varepsilon(t,\cdot)\|_{L^2(\Omega)}^2 + \int_0^t \int_{\Omega} \oldnorm{\partial_t\bq^{\varepsilon}}^2_{\mathbb{A}({\bq}^{\varepsilon})} + \varepsilon |\partial_t\bq^{\varepsilon}|^2 \dd x \dd s \le \mathcal{C}\left( \|g\|_{W^{1,2}(0,T;L^2(\Omega))}, \|u_0\|_{W^{2,2}(\Omega)}, a, U \right).
\end{equation}
While the limits in the first and third terms of \eqref{dec:60} are standard and are based on weak lower semicontinuity of the $L^2$-norm, the limit in the second term follows from the facts that, as $\tau \to 0$,
\begin{align*}
    \bq^{\varepsilon}_{{\theta,\tau}} &\to \bq^{\varepsilon} &&\textrm{ a.e. in } Q,\\
  \delta_{\tau}\bq^{\varepsilon} &\rightharpoonup \partial_t \bq^\varepsilon &&\textrm{ weakly in } L^2(Q; \R^{d}),
\end{align*}
followed by the convergence arguments established in Subsect. \ref{dec:63}. Thus, \eqref{dec:60a} holds. Consequently, we conclude that $\partial_t u \in L^\infty(0,T; L^2(\Omega))$, which is the first statement of part (ii) of Theorem \ref{main_thm}.

\subsection{Higher integrability result}\label{higher_integrability_section}

It follows from \eqref{dec:13b} and \eqref{dec:60a} that
$$
\int_{Q} \oldnorm{\partial_t\bq^{\varepsilon}}^2_{\mathbb{A}({\bq}^{\varepsilon})} + \oldnorm{\nabla\bq^{\varepsilon}}^2_{\mathbb{A}({\bq}^{\varepsilon})}\le \mathcal{C}\left( \|g\|_{W^{1,2}(0,T;L^2(\Omega))}, \|u_0\|_{W^{2,2}(\Omega)}, U \right)=:\mathcal{C}^*.
$$
Introducing the time-spatial gradient $\nabla_{t,x}u := (\partial_t u, \partial_j u, \dots, \partial_d u)$, we can rewrite the last estimate as
$$
\int_{Q} \oldnorm{\nabla_{t,x} \bq^{\varepsilon}}^2_{\mathbb{A}({\bq}^{\varepsilon})}\le \mathcal{C}^*.
$$
Using the last inequality in \eqref{odhady_s_carkou_new}, it implies that\footnote{$\mathcal{C}^*$ is a generic constant, whose value can change from line to line.}
$$
\int_{Q} \frac{|\nabla_{t,x}\bq^\varepsilon|^2}{(1+|\bq^\varepsilon|)^{a+1}}\dd x \dd t \le \mathcal{C}^*,
$$
and by simple manipulation also
$$
\int_{Q} |\nabla_{t,x} (1+|\bq^\varepsilon|)^{\frac{1-a}{2}}|^2 \dd x \dd t \le \mathcal{C}^*.
$$
Hence, using also \eqref{dec:17a}, we conclude that, for $a\in (0,1)$,
$$
\|(1+|\bq^\varepsilon|)^{\frac{1-a}{2}}\|_{W^{1,2}(Q)}\le \mathcal{C}^*,
$$
and it then follows from Sobolev embedding that
$$
\|(1+|\bq^\varepsilon|)^{\frac{1-a}{2}}\|_{L^p(Q)}\le \mathcal{C}^*,
$$
where $p<\infty$ is arbitrary if $d=1$ and $p=\frac{2(d+1)}{d-1}$ if $d>1$. Thus if $d=1$ and $a<1$ we have a bound in any Lebesgue space. In the case of $d>1$, the above computation gives that
$$
\int_Q (1+|\bq^\varepsilon|)^{\frac{(1-a)(d+1)}{d-1}}\dd x \dd t\le \mathcal{C}^*,
$$
which improves the integrability of  $\{\bq^\varepsilon\}$, uniformly w.r.t. $\varepsilon$, provided that
$$
\frac{(1-a)(d+1)}{d-1}>1 \quad \Leftrightarrow \quad \frac{2}{d+1}>a.
$$
For $\varepsilon_m \to 0$, this piece of information is preserved. Thus, the second assertion of Theorem \ref{main_thm} is established.

\section{Generalization to systems of nonlinear parabolic equations}
\label{rem-final}

Finally, we generalize our problem and formulate the existence and uniqueness results for such a generalization. A detailed proof is not provided as it follows from the combination of the arguments developed in the proof of Theorem \ref{main_thm} above and from the arguments used when proving the result established in \cite{BeBuMa18}, where the stationary case is treated. 

\begin{thm}\label{main_thm2}
Let $\Omega$, $Q$ be as before and let $F:\mathbb{R} \to \R_+$ be a strictly convex  ${C}^{1,1}$ function fulfilling $F(0)=0$. Assume in addition that there exists a positive constant $C$ such that for all $s\in \R$ there holds
$$
C^{-1}|s|-C\le F(|s|) \le C(1+|s|).
$$
For $N\in \mathbb{N}$ arbitrary, set
$$
\vect{f}(\bq):= \partial_{\bq}F(|\bq|), \quad \textrm{ where } \bq\in \R^{d\times N}.
$$
Let $g\in L^2\pigl(0,T; L^{2}\left(\Omega;\R^N\right)\pigr)$, $u_0\in W^{1,2}_{per}\left(\Omega;\R^N\right)$ and there exist a compact set $K\subset \R^{d\times N}$ such that
$$
\nabla u_0(x) \in \vect{f}(K) \quad \textrm{ for a.a. } x\in \Omega.
$$
Then, there exists a unique couple $(u,\bq)$ such that
$$
\begin{aligned}
u&\in W^{1,2}\pigl(0,T; L^2\left(\Omega;\R^N\right)\pigr)\cap L^2\pigl(0,T; W^{2,2}_{per}\left(\Omega;\R^N\right)\pigr),\\
\bq&\in L^1\pigl(0,T; L^1\left(\Omega; \mathbb{R}^{d\times N}\right)\pigr)
\end{aligned}
$$
and
\begin{subequations}\label{flux-integrable_weak_formulationAA}
\begin{align}
       \int_{\Omega} \partial_t u \cdot \varphi + \bq \cdot \nabla \varphi \dd x&=\int_{\Omega} g \cdot \varphi\dd x &&\textrm{for all $\varphi\in W^{1,\infty}_{per}(\Omega;\R^N)$ and a.a. $t\in (0,T)$},\label{rovniceAA}
    \\
    \nabla u&=\vect{f}(\bq) &&\textrm{a.e. in }Q,\label{constit_oldAA}
    \\
     \Vert u(t,\cdot)-u_0\Vert_{L^2(\Omega;\R^N)}&\xrightarrow{t\to 0^+}0. \label{initialAA}
\end{align}
\end{subequations}
\end{thm}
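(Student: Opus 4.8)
\noindent The plan is to repeat, step by step, the scheme used to prove Theorem~\ref{main_thm}, with the explicit map $\vect{f}_{\!1}(\bq)=\bq\,(1+|\bq|^a)^{-1/a}$ everywhere replaced by $\vect{f}=\partial_{\bq}F(|\bq|)$; the decisive observation is that every property of $\vect f$ used in Sections~\ref{preliminaries_section}--\ref{sec:5} is a consequence of the hypotheses on $F$. I would set $\Phi(\bq):=F(|\bq|)$, a strictly convex $C^{1,1}$ potential with $\Phi\ge0$, $\Phi(0)=0$, $\vect f=\nabla\Phi$, and note that its Hessian $\mathbb{A}(\bq):=D^2\Phi(\bq)=\nabla_{\bq}\vect f(\bq)$ is symmetric, positive semidefinite and bounded ($|\mathbb{A}(\bq)|\le C$ from the $C^{1,1}$-bound), so that $(\vect v,\vect w)_{\mathbb{A}(\bq)}:=\vect v\cdot\mathbb{A}(\bq)\vect w$ again defines a $\bq$-dependent scalar product with $\oldnorm{\vect v}_{\mathbb{A}(\bq)}\le C\,|\vect v|$; this replaces Lemmas~\ref{derivace}--\ref{new_scalar_product}. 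The upper growth $F(|s|)\le C(1+|s|)$ makes $\vect f$ bounded (hence $\nabla u\in L^\infty(Q)$, and $\vect f$ is a diffeomorphism of $\R^{d\times N}$ onto a bounded open set), while the lower bound $C^{-1}|s|-C\le F(|s|)$ gives, by convexity, $\vect f(\bq)\cdot\bq\ge\Phi(\bq)-\Phi(0)\ge C^{-1}|\bq|-C$, which is the coercivity that in the concrete case came from $\int_Q|\bq|^2(1+|\bq|^a)^{-1/a}$. Finally $\vect f^{\varepsilon}:=\vect f+\varepsilon\,\mathrm{id}=\nabla\Phi^{\varepsilon}$ with $\Phi^{\varepsilon}:=\Phi+\tfrac{\varepsilon}{2}|\cdot|^2$ is strongly monotone, hence a bi-Lipschitz bijection of $\R^{d\times N}$, which is Lemma~\ref{monotonie}.

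Granting these facts, uniqueness follows verbatim from Section~\ref{uniqueness_section}, using only the strict monotonicity of $\vect f$ (equivalently, the strict convexity of $\Phi$). For existence I would run the $\varepsilon$-approximation $\nabla u=\vect f^{\varepsilon}(\bq)$ and the Galerkin method as in Section~\ref{galerkin_section}: testing with $c^N_r$ and using the coercivity above yields the $L^1(Q)$-bound on $\bq^N$; testing with $\ddt c^N_r$ gives $\partial_t u^N\in L^2$, since $\bq^N\!\cdot\partial_t(\nabla u^N)=\partial_t\big[(\Phi^{\varepsilon})^*(\nabla u^N)\big]$ with $(\Phi^{\varepsilon})^*\ge0$ the Legendre conjugate and $(\Phi^{\varepsilon})^*(\nabla u_0)$ bounded; testing with $\lambda_r c^N_r$ and arguing as for~\eqref{odhady_gradient} yields $\int_Q\oldnorm{\nabla\bq^N}^2_{\mathbb{A}(\bq^N)}+\varepsilon|\nabla\bq^N|^2+|\nabla^2u^N|^2\le C$. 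The r\^ole played in Theorem~\ref{main_thm} by the smallness $\|\nabla u_0\|_{L^\infty}<1$ is now played by the compatibility condition $\nabla u_0\in\vect f(K)$: it forces $\bq^{\varepsilon}(0,\cdot)=(\vect f^{\varepsilon})^{-1}(\nabla u_0)$ to be bounded in $L^\infty(\Omega)$ uniformly in $\varepsilon$, because $|\vect f(\bq^{\varepsilon}(0,\cdot))|\le|\nabla u_0|=|\vect f(\vect f^{-1}(\nabla u_0))|\le\sup_{\xi\in K}|\vect f(\xi)|$ and $|\vect f(\cdot)|$ is a strictly increasing function of $|\cdot|$; hence $(\Phi^{\varepsilon})^*(\nabla u_0)$ and $\|\nabla\bq^{\varepsilon}(0,\cdot)\|_{L^2}$ are controlled uniformly in $\varepsilon$ and every $\varepsilon$-independent estimate of Subsect.~\ref{apriori_revisited} survives. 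Letting $N\to\infty$ and then $\varepsilon_m\to0$, the identification $\nabla u=\vect f(\bq)$ is obtained exactly as in Section~\ref{sec:5}: Chacon's biting lemma (see \cite{BaMu89}), the boundedness of $\vect f$ and the strict-monotonicity trick give a.e.\ convergence of a subsequence of $\{\bq^m\}$ via \cite{DalMasoMurat1998}, and the weak lower semicontinuity of $\bz\mapsto\int_Q\oldnorm{\bz}^2_{\mathbb{A}(\bq)}$ needed there is exactly the statement of Subsect.~\ref{dec:63}, whose proof uses only continuity and boundedness of $\mathbb{A}$.

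The step that does not transfer mechanically, and which I expect to be the main obstacle, is the limit in the evolutionary equation when $\bq$ is merely $L^1$. As in Section~\ref{sec:5} one renormalises~\eqref{wf-rovnice} for the $\varepsilon_m$-problem with the test function $\tau_k(|\bq^m|)\psi$ and lets $m\to\infty$, $k\to\infty$; all terms except the one analogous to $J^{m,k}_5=-\int_Q\psi\,G_k(|\bq^m|)\,\di\vect f(\bq^m)\,\dd x\,\dd t$ are handled as before, the auxiliary function now being the one with $G_k=0$ on $[0,k]$, $|G_k(t)|\le\mathcal C(a)t$ for $t\ge k$, chosen so that $\bq\cdot\nabla\tau_k(|\bq|)=\vect f(\bq)\cdot\nabla G_k(|\bq|)$. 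Writing $\di\vect f(\bq^m)=\sum_s(\partial_s\bq^m,\vect e_s)_{\mathbb{A}(\bq^m)}$ (suitably interpreted for systems), the Cauchy--Schwarz inequality in the $\mathbb{A}(\bq^m)$-product together with~\eqref{dec:13b} reduces the whole term to showing $\int_Q\oldnorm{G_k(|\bq^m|)\vect e_s}^2_{\mathbb{A}(\bq^m)}\to0$. In the concrete case this was immediate because $\oldnorm{\vect v}^2_{\mathbb{A}(\bq)}\le|\vect v|^2(1+|\bq|^a)^{-1/a}$, whose weight decays like $|\bq|^{-1}$ and exactly absorbs the growth $|G_k(|\bq|)|\lesssim|\bq|$ on $\{|\bq|>k\}$, leaving $\int_{\{|\bq|>k\}}|\bq|\to0$ by $\bq\in L^1(Q)$. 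In the general case the largest eigenvalue of $\mathbb{A}(\bq)$ equals $\max\{F''(|\bq|),\,F'(|\bq|)/|\bq|\}$ and the $F''$-component need not decay like $|\bq|^{-1}$, so this cancellation is lost; to close the term one has to replace the renormalisation by $|\bq|$ with one adapted to the potential $\Phi$ (respectively its conjugate $\Phi^*$) and to exploit the weighted gradient estimate more sharply --- precisely the device developed for the stationary problem in~\cite{BeBuMa18}. Combining that device with the parabolic scheme above produces~\eqref{rovniceAA} and finishes the proof. Note, finally, that only the ``part~(i)''-type conclusion is asserted, so neither the improved time-derivative estimate of Subsect.~\ref{time_derivative} nor the restriction $a<2/(d+1)$ is required.
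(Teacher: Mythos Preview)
Your proposal is correct and matches the paper's own treatment: the paper does not prove Theorem~\ref{main_thm2} in detail but merely states that it follows by combining the proof of Theorem~\ref{main_thm} with the methods of~\cite{BeBuMa18} for the stationary problem. You have in fact gone further than the paper, correctly identifying where the direct transfer fails (the $J_5^{m,k}$ renormalisation term, since the eigenvalue $F''(|\bq|)$ of $\mathbb{A}(\bq)$ need not decay like $|\bq|^{-1}$) and why~\cite{BeBuMa18} is the relevant input for closing that step.
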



\end{document}